\newcommand{\ab}[1]{\ensuremath{\left[#1\right]}}
\newcommand{\abs}[1]{\ensuremath{\left|#1\right|}}
\newcommand{\cc}{\ensuremath{\mathbb{C}}}
\newcommand{\ep}{\epsilon}
\newcommand{\ee}{\mathbb{E}}
\renewcommand{\l}{\mathcal{L}}
\newcommand{\map}{\longrightarrow}
\newcommand{\nn}{\ensuremath{\mathbb{N}}}
\newcommand{\ol}[1]{\overline{#1}}
\newcommand{\pp}{\ensuremath{\mathbb{P}}}
\newcommand{\rb}[1]{\ensuremath{\left(#1\right)}}
\newcommand{\rr}{\ensuremath{\mathbb{R}}}
\newcommand{\set}[1]{\ensuremath{\left\{#1\right\}}}
\newcommand{\wt}[1]{\widetilde{#1}}
\newcommand{\zz}{\ensuremath{\mathbb{Z}}}
\newcommand{\Leb}{\mathrm{L}}
\DeclareMathOperator{\Res}{Res}
\numberwithin{equation}{section}
\theoremstyle{definition}
\newtheorem{defn}{Definition}[section]
\newtheorem{rmk}[defn]{Remark}
\theoremstyle{plain}
\newtheorem{thm}[defn]{Theorem}
\newtheorem{prop}[defn]{Proposition}
\newtheorem{lem}[defn]{Lemma}
\newcommand{\klt}{K^{LT}_{n,r}}
\newcommand{\klim}{K^{LT}_r}
\newcommand{\klimtrunc}{K^{tr}_{r,M}}
\newcommand{\kltw}{\wt K^{LT}_{n,r}}
\newcommand{\knr}{K_{n,r}}
\newcommand{\cadj}{\wt c_n}
\newcommand{\cd}{C_{\delta_1}}
\newcommand{\ld}{\ell_{\delta_2}}
\newcommand{\zcrit}{z^\ast_{c,\gamma}}
\newcommand{\zcritw}{\wt{z}^\ast_{c,\gamma}}
\newcommand{\mucrit}{\mu^\ast_{c,\gamma}}
\newcommand{\mucritw}{\wt{\mu}^\ast_{c,\gamma}}
\newcommand{\gc}{\ol g_c}
\newcommand{\vw}{\wt{v}}
\newcommand{\ww}{\wt{w}}
\newcommand{\zw}{\wt{z}}
\newcommand{\fdcont}{C^v}
\newcommand{\fdcrel}{C^v_{\text{rel}}}
\newcommand{\fdcirrel}{C^v_{\text{irrel}}}
\newcommand{\fdclim}{\ensuremath{\hat{C}^v_\infty}}
\newcommand{\fdclimtrunc}{\ensuremath{\hat{C}^v_M}}
\newcommand{\kerclim}{\ensuremath{\hat{C}^w_\infty}}
\newcommand{\kerclimtrunc}{\ensuremath{\hat{C}^w_M}}
\newcommand{\kercont}{C^w}
\newcommand{\kercontrel}{C^w_\text{rel}}
\newcommand{\kercontirrel}{C^w_\text{irrel}}
\newcommand{\h}{H_{c,\gamma}}
\newcommand{\hn}{H_{n,c,\gamma}}
\newcommand{\gamcrit}{\gamma^\ast}
\newcommand{\gcrit}[1]{\gamma^\ast_{c,#1}}
\newcommand{\bcr}{\cite{bcr}}
\newcommand{\bcf}{\cite{bcf}}
\def\R{{\mathbb R}}
\def\C{{\mathbb C}}
\def\Z{{\mathbb Z}}
\def\P{{\mathbb P}}
\def\l{{\lambda}}
\title{Tracy-Widom asymptotics for a random polymer model with gamma-distributed weights}
\author{Neil O'Connell\\ University of Warwick \and Janosch Ortmann\\ University of Toronto}
\date{}
\begin{document}

\maketitle
\fancyhead[R]{\nouppercase{\leftmark}}
\fancyhead[L]{}
\bibliographystyle{acm}
\bibstyle{plain}

\begin{abstract}
\par\noindent We establish Tracy-Widom asymptotics for the partition function of a random polymer model
with gamma-distributed weights recently introduced by Sepp\"al\"ainen.  We show that the partition function of 
this random polymer can be represented within the framework of the geometric RSK correspondence and 
consequently its law can be expressed in terms of Whittaker functions.   
This leads to a representation of the law of the partition function which is amenable to asymptotic analysis. 
In this model, the partition function 
plays a role analogous to the smallest eigenvalue in the Laguerre unitary ensemble of random matrix theory. 
\end{abstract}

\section{Introduction}

Denote by $\Phi_{m,n}$ the set of `paths' of the form $\phi=\{(1,j_1),(2,j_2),\ldots,(m,j_m)\}$, where
$1\le j_1\le \cdots\le j_m\le n$, as shown in Figure \ref{fig1}.  
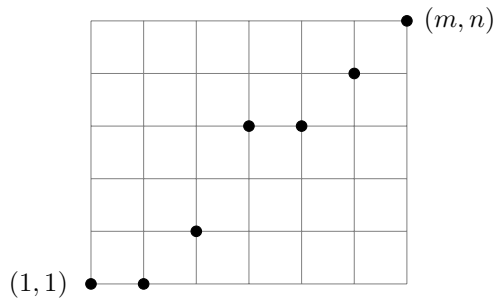
\begin{figure}
 \begin{center}
\begin{tikzpicture}[scale=.7]
\draw[help lines] (1,1) grid (7,6);

\draw [fill] (1,1) circle [radius=0.1];
\draw [fill] (2,1) circle [radius=0.1];
\draw [fill] (3,2) circle [radius=0.1];
\draw [fill] (4,4) circle [radius=0.1];
\draw [fill] (5,4) circle [radius=0.1];
\draw [fill] (6,5) circle [radius=0.1];
\draw [fill] (7,6) circle [radius=0.1];

\node at (0,1) {\small{$(1,1)$}};
\node at (8,6) {\small{$(m,n)$}};
\end{tikzpicture}

\end{center}  
\caption{ \small A path in  $\Phi_{m,n}$.  }  \label{fig1}
\end{figure}
Let $g_{ij}$ be independent gamma-distributed random variables with common parameter $\gamma$, and set
$$Z_{m,n}=\sum_{\phi\in\Phi_{m,n}} \prod_{(i,j)\in\phi} g_{ij}.$$
This is the partition function of a random polymer recently introduced by Sepp\"al\"ainen~\cite{sepp1} 
where it was observed that this model exhibits the so-called Burke property.  The analogous property for other polymer
models, specifically the semi-discrete Brownian polymer introduced in \cite{OConnellYor01} and the log-gamma polymer
introduced in \cite{Sep12}, has been used to study asymptotics of the partition function~\cite{OConnellYor01,MoriartyOConnell07,Sep12,SepValko10}.
More recently, the semi-discrete and log-gamma polymer models have been shown to have
an underlying integrable structure, via a remarkable connection between a combinatorial structure 
known as the geometric RSK correspondence and $GL(n,\mathbb R)$-Whittaker functions 
\cite{OConnellTodaLattice,cosz,osz}.  This integrable structure has allowed very precise (Tracy-Widom) asymptotics to be 
obtained \cite{bc,bcf,bcr}. For these models, the partition functions play a role analogous to the
largest eigenvalue in the Gaussian and Laguerre unitary ensembles of random matrix theory.

In the present paper, we show that the partition function of the above random polymer can also be represented
within the framework of the geometric RSK correspondence and consequently its law can be expressed 
in terms of Whittaker functions.  For this model, the partition function plays a role analogous to the {\em smallest}
eigenvalue in the Laguerre unitary ensemble.  
This leads to a representation of the law of the partition function from which we establish 
Tracy-Widom asymptotics for this model.  A precise statement is given as follows.
\begin{thm} \label{main} 
Suppose $m/n\to\alpha>0$ as $n\to\infty$.  Set $c=1+\alpha$,
$$\mu=\inf_{z>0}\big[c\psi'(z+\gamma)-\psi'(z)\big],
\qquad H(z)=\ln\Gamma(z)-c\ln\Gamma(z+\gamma)+\mu z.$$
The infimum is achieved at some $z^*>0$ and $\bar g :=-H'''(z^*)>0$.
For $\gamma$ sufficiently small,
$$\lim_{n\to\infty} \pp\set{\frac{\ln Z_{m,n}-n\mu}{n^{1/3}}\leq r} = F_\text{GUE}\rb{\rb{\ol g/2}^3\, r}$$
where $F_\text{GUE}$ is the Tracy--Widom distribution function.
\end{thm}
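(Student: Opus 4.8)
The plan is to first establish, in the circle of ideas of \cite{cosz}, \osz, that applying the geometric RSK correspondence to the array $(g_{ij})_{1\le i\le m,\,1\le j\le n}$ identifies $Z_{m,n}$ with a distinguished coordinate of the output array; because $Z_{m,n}$ is a sum over \emph{weakly} increasing paths and the weights are gamma- (not inverse-gamma-) distributed, the relevant coordinate is the one playing the role of the \emph{smallest} particle of the associated Whittaker process, which is the source of the LUE smallest-eigenvalue analogy. Pushing the product measure forward through geometric RSK expresses the law of $Z_{m,n}$ as an integral of a $GL(n,\mathbb R)$-Whittaker function against an explicit $\Gamma$-factor density and a Sklyanin/Mellin--Barnes measure. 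Using the orthogonality and Bump--Stade integral identities for Whittaker functions (applicable here since all parameters equal $\gamma$), I would then convert this into a Fredholm-determinant representation for a suitable transform of $Z_{m,n}$, of the shape $\mathbb E\!\left[e^{-s\,Z_{m,n}^{-\theta}}\right]=\det(I-K_{n,s})$ on $L^2$ of a contour, for a suitable $\theta>0$, in which, after separating the macroscopic part, the integrand of $K_{n,s}$ carries a factor $\exp\!\big(n[\ln\Gamma(z)-c\ln\Gamma(z+\gamma)+\mu z]\big)=\exp(nH(z))$; the constant $c=1+\alpha$ arises because the Whittaker-measure normalization contributes $\approx n$ factors $\ln\Gamma(z)$ (one per column) and $\approx m+n=cn$ factors $\ln\Gamma(z+\gamma)$ (one per row and per column).

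\textbf{Step 2: the variational problem and the degenerate saddle.} Next I would analyze $H$ directly. From the definition of $\mu$ one checks, using monotonicity and convexity of the relevant $\psi$-functions on $(0,\infty)$, that the infimum defining $\mu$ is attained at a unique $z^*>0$, that $H'(z^*)=H''(z^*)=0$, and that $\bar g=-H'''(z^*)>0$; hence $z^*$ is an order-three saddle and $H(z)=H(z^*)-\tfrac{\bar g}{6}(z-z^*)^3+O\big((z-z^*)^4\big)$ near $z^*$. This is precisely the configuration that produces $n^{1/3}$-scale fluctuations and an Airy-kernel limit. I must also check that, for $\gamma$ sufficiently small, $z^*$ lies in a region through which the contours of Step 1 can be deformed without colliding with the poles of $\Gamma(\cdot)$ on $\{0,-1,-2,\dots\}$ or those of $\Gamma(\cdot+\gamma)$; I expect the hypothesis ``$\gamma$ sufficiently small'' to be exactly what makes this possible, and also what keeps the Whittaker identities of Step 1 within their domain of validity.

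\textbf{Step 3: steepest descent and the Airy kernel.} Finally, with $\ln Z_{m,n}$ centred and scaled as in the statement --- equivalently, choosing the transform parameter $s$ so that the relevant edge of $K_{n,s}$ sits at order one --- I would deform the $z$- and $z'$-contours of $K_{n,s}$ to steepest-descent paths through $z^*$. On the local scale $z=z^*+n^{-1/3}\zeta$ the exponent becomes $nH(z^*)-\tfrac{\bar g}{6}\zeta^3-r\,\zeta+o(1)$; rescaling $\zeta$ to bring $-\tfrac{\bar g}{6}\zeta^3$ into the Airy normal form turns $K_{n,s}$ into the Airy kernel $K_{\mathrm{Ai}}$ on $L^2((\bar g/2)^3 r,\infty)$ --- this is where the constant $(\bar g/2)^3$ in the statement comes from --- so that $\det(I-K_{n,s})\to\det(I-K_{\mathrm{Ai}})_{L^2((\bar g/2)^3 r,\infty)}=F_\text{GUE}\big((\bar g/2)^3 r\big)$. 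Two points need care: (i) a tail estimate showing that the portions of the contours away from $z^*$ contribute negligibly and that the Fredholm determinant itself converges --- delicate because $\ln\Gamma$ grows and has a string of poles, so the global contours must descend fast while staying analytic; and (ii) passing from convergence of the transform back to convergence in distribution of $\ln Z_{m,n}$, which is standard once one has the appropriate one-sided tail bound or tightness, since the transform used determines and is continuous on the relevant class of laws.

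\textbf{Main obstacle.} The crux is Step 1: pinning down the correct Whittaker- and Fredholm-determinant identities for this smallest-particle polymer, since the identities available in \cite{cosz}, \osz, \bcr are tailored to the largest-particle (log-gamma) case, and then verifying their hypotheses. Once the exact formula is in hand with exponent $\exp(nH(z))$, Steps 2--3 follow the by-now-standard steepest-descent route of \bcr, \bcf; the one genuinely new analytic point there is controlling the contours against the poles of the Gamma functions, which I believe is exactly why the theorem is restricted to small $\gamma$.
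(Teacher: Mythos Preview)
Your proposal is correct and follows essentially the same route as the paper: geometric RSK identifies $Z_{m,n}=1/t_{m1}$, whose Laplace transform $\mathbb{E}[e^{-sZ_{m,n}}]$ (so $\theta=1$, not a general $Z_{m,n}^{-\theta}$) is expressed via the Whittaker--Plancherel theorem as a contour integral, then rewritten as a Fredholm determinant with exponent $n\hn(z)$, and finally analyzed by steepest descent around the order-three saddle $z^*$ of $H$, with Lemma~\ref{lem:technical} handling the passage from transform convergence to convergence in law. The one refinement worth noting is that in the paper the small-$\gamma$ hypothesis is used not so much for pole-avoidance as to make the explicit steepest-descent bounds on $\Re H$ along the chosen contours (Lemma~\ref{lem:BoundH}) tractable via the expansion $z=\gamma\tilde z$.
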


The connection to random matrices can be further illustrated by considering the zero-temperature limit,
which corresponds to letting $\gamma\to 0$.  Then the collection of random variables $-\gamma \log g_{ij}$ 
converge weakly to a collection of independent standard exponentially distributed variables 
$w_{ij}$ and so, by the principle of the largest term, 
the sequence $-\gamma\log Z_{m,n}$ converges weakly to the first passage percolation variable
$$f_{m,n}=\min_{\phi\in\Phi_{m,n}} \sum_{(i,j)\in\phi} w_{ij}.$$
This first passage percolation problem was previously considered in~\cite{OConnell99} where it is argued, 
using a representation of $f_{m,n}$ as a departure process from a series of `Exp/Exp/1' queues in tandem
together with the Burke property for such queues, that, almost surely,
\begin{equation}\label{fpp}
\lim_{n\to\infty} f_{\alpha n,n}/n = \left(\sqrt{1+\alpha}-1\right)^2.
\end{equation}
Moreover, it can be inferred from further results presented in~\cite{DMO05} on a discrete version of this model 
with geometric weights (or alternatively from Section 2 below) that $f_{m,n}$ has the same law as the smallest 
eigenvalue in the Laguerre ensemble with density proportional to
$$\prod_{1\le i<j\le n}(\l_i-\l_j)^2 \prod_{i=1}^n \l_i^{m-1} e^{-\l_i} d\l_i .$$
Given this identity in law, the asymptotic relation \eqref{fpp} can also be seen as a consequence of the 
Marchenko-Pastur law.  As a further consistency check, one can easily verify (see Lemma~\ref{lem:ExpandMu} below) that 
$$-\gamma\mu\to  \left(\sqrt{1+\alpha}-1\right)^2$$
as $\gamma\to 0$, where $\mu$ is defined in the statement of Theorem~\ref{main}.

The outline of the paper is as follows.  In the next section we relate the above polymer model to the 
geometric RSK correspondence and deduce, using results from \cite{cosz,osz}, an integral formula
for the Laplace transform of the partition function. In Section \ref{sec:FD} we show that this Laplace transform can be written as a Fredholm determinant, which allows us, in Section \ref{sec:asymptotics} 
to take the limit as $n\to\infty$. Section \ref{sec:proofs} contains proofs of some lemmas that we require on the way.

\bigskip

\noindent {\bf Acknowledgements.}  Thanks to Timo Sepp\"al\"ainen for helpful discussions and
for making the manuscript~\cite{sepp1} available to us.

\section{Geometric RSK, polymers and Whittaker functions}

The geometric RSK correspondence is a bijective mapping $$T:(\R_{>0})^{h\times n}\to (\R_{>0})^{h\times n}.$$
It was introduced by Kirillov~\cite{Kirillov01} as a geometric lifting of the RSK correspondence,  
and defined as follows.  
Let $W=(w_{ij})\in(\R_{>0})^{h\times n}$
and write $T(W)=(t_{ij})\in(\R_{>0})^{h\times n}$. For $1\le k\le n$ and $1\le r\le h\wedge k$,
\begin{equation}\label{grsk} 
t_{h-r+1,k-r+1}\dotsm t_{h-1,k-1} t_{hk} 
= \sum_{(\pi_1,\ldots,\pi_r)\in\Pi^{(r)}_{h,k}} \prod_{(i,j)\in \pi_1\cup\cdots\cup\pi_r} w_{ij},
\end{equation}
where $\Pi^{(r)}_{h,k}$ denotes the set of $r$-tuples of non-intersecting up/right lattice paths $\pi_1,\ldots,\pi_r$
starting at positions $(1,1),(1,2),\ldots,(1,r)$ and ending at positions $(h,k-r+1),\ldots,(h,k-1),(h,k)$, as shown 
in Figure \ref{fig2}.  The remaining entries of $T(W)$ are determined by the relation $T(W^t)=T(W)^t$. 
\setlength{\unitlength}{1.3pt}

\begin{figure}
 \begin{center}
\begin{tikzpicture}[scale=.7]
\draw[help lines] (1,1) grid (9,8);
\draw[very thick] (1,1)--(2,1)--(3,1)--(3,2)--(4,2)--(4,3)--(5,3)--(6,3)--(7,3)--(7,4)--(8,4)--(9,4);
\draw[very thick] (1,2)--(2,2)--(2,3)--(2,4)--(3,4)--(4,4)--(5,4)--(5,5)--(6,5)--(7,5)--(8,5)--(9,5);
\draw[very thick] (1,3)--(1,4)--(1,5)--(2,5)--(3,5)--(3,6)--(4,6)--(5,6)--(6,6)--(7,6)--(8,6)--(9,6);
\draw [fill] (1,1) circle [radius=0.1];
\draw [fill] (1,2) circle [radius=0.1];
\draw [fill] (1,3) circle [radius=0.1];
\draw [fill] (9,4) circle [radius=0.1];
\draw [fill] (9,5) circle [radius=0.1];
\draw [fill] (9,6) circle [radius=0.1];

\node at (0,1) {\small{$(1,1)$}};
\node at (10,6) {\small{$(h,k)$}};
\node at (10,8) {\small{$(h,n)$}};
\end{tikzpicture}

\end{center}  
\caption{ \small A 3-tuple of non-intersecting paths in $\Pi^{(3)}_{h,k}$.  }  \label{fig2}
\end{figure}
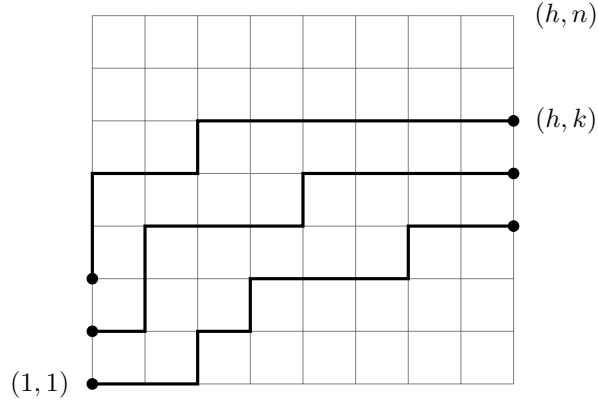

Note in particular that
$$t_{hn}=\sum_{\pi\in\Pi_{h,n}} \prod_{(i,j)\in\pi} w_{ij},$$
where $\Pi_{h,n}$ is the set of up/right lattice paths in $\Z^2$ from $(1,1)$ to $(h,n)$.
This gives an interpretation of $t_{hn}$ as a polymer partition function, providing the basis
for the analysis of the log-gamma polymer developed in~\cite{cosz,osz}.

The relation to the random polymer defined in the introduction is as follows.  
\begin{prop}
Suppose $h\ge n$ and set $m=h-n+1$.
For $1\le i\le m$ and $1\le j\le n$, set $g_{ij}=1/w_{i+j-1,n-j+1}$.  Then 
\begin{equation}\label{random}
\frac1{t_{m1}}=\sum_{\phi\in\Phi_{m,n}} \prod_{(i,j)\in\phi} g_{ij},
\end{equation}
where $\Phi_{m,n}$ the set of $\phi=\{(1,j_1),(2,j_2),\ldots,(m,j_m)\}$ with
$1\le j_1\le \cdots\le j_m\le n$.
\end{prop}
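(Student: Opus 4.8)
The plan is to read $t_{m1}$ off as a ratio of two instances of the defining relation \eqref{grsk}, and then match the resulting expression with $Z_{m,n}$ using a combinatorial bijection and the substitution $g_{ij}=1/w_{i+j-1,n-j+1}$.

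The point of departure is that $(m,1)$ is the upper-left endpoint of the anti-diagonal $(m,1),(m+1,2),\dots,(h,n)$, which has length $n$ since $h-n+1=m$. Applying \eqref{grsk} with $k=n$ and $r=n$ (legitimate because $h\ge n$) expresses $t_{m1}t_{m+1,2}\cdots t_{hn}$ as a sum over $\Pi^{(n)}_{h,n}$; but a member of $\Pi^{(n)}_{h,n}$ consists of non-intersecting up/right paths from $(1,1),\dots,(1,n)$ to $(h,1),\dots,(h,n)$, and since an up/right path cannot decrease its column, the $a$-th path must run straight up column $a$. Hence this sum is just $\prod_{i,j}w_{ij}$, the product of all entries of the array. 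Applying \eqref{grsk} instead with $k=n$ and $r=n-1$ expresses $t_{m+1,2}\cdots t_{hn}$ as a sum over $\Pi^{(n-1)}_{h,n}$. Dividing the first relation by the second, and using that the paths in a member of $\Pi^{(n-1)}_{h,n}$ are vertex-disjoint, so that the product over their union times the product over the complement is $\prod_{i,j}w_{ij}$, we get
$$\frac{1}{t_{m1}}=\sum_{(\pi_1,\dots,\pi_{n-1})\in\Pi^{(n-1)}_{h,n}}\ \prod_{(a,b)\notin\pi_1\cup\cdots\cup\pi_{n-1}}\frac{1}{w_{ab}},$$
where the inner product runs over cells of the $h\times n$ array not covered by the family (the degenerate case $n=1$ follows directly from \eqref{grsk} with $k=r=1$).

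The crux is to identify, for each member of $\Pi^{(n-1)}_{h,n}$, its complement in the $h\times n$ array with the image $\{(i+j-1,\,n-j+1):(i,j)\in\phi\}$ of a unique $\phi\in\Phi_{m,n}$, and to check that this sets up a bijection $\Pi^{(n-1)}_{h,n}\to\Phi_{m,n}$. I would exploit that a path in $\Pi^{(n-1)}_{h,n}$ from $(1,a)$ to $(h,a+1)$ makes exactly one rightward step, hence is determined by the row $t_a\in\{1,\dots,h\}$ at which that step is taken, and that vertex-disjointness of the family is equivalent to $t_1>t_2>\cdots>t_{n-1}$. A column-by-column count then shows the complement meets column $c$ in a contiguous (possibly empty) run of rows, with these runs stacking monotonically from the top of column $n$ down to the bottom of column $1$ and totalling $h-(n-1)=m$ cells; this is precisely the image of an element $\{(1,j_1),\dots,(m,j_m)\}$ of $\Phi_{m,n}$ under $(i,j)\mapsto(i+j-1,n-j+1)$, and one recovers $t_{n-j}=j+\#\{i:j_i\le j\}$, so the map is invertible. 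Since $w_{i+j-1,n-j+1}^{-1}=g_{ij}$, the complement product equals $\prod_{(i,j)\in\phi}g_{ij}$, and summing over the bijection gives \eqref{random}.

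The main obstacle is the combinatorial bookkeeping in this last step: one has to verify that the complement has exactly $m$ cells, that it has the asserted staircase shape, and --- most delicately --- that after the coordinate change the column indices come out non-decreasing in $i$ (and not reversed), so that one lands exactly in $\Phi_{m,n}$. Everything else, namely the two applications of \eqref{grsk} and the cancellation, is routine.
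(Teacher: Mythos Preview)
Your proposal is correct and follows exactly the paper's approach: apply \eqref{grsk} with $k=r=n$ and with $k=n$, $r=n-1$, divide, and identify the complement of an $(n-1)$-tuple in $\Pi^{(n-1)}_{h,n}$ with an element of $\Phi_{m,n}$ under $(i,j)\mapsto(i+j-1,n-j+1)$. The paper leaves this last bijection to a pair of figures, whereas you spell it out via the single up-step positions $t_1>\cdots>t_{n-1}$; that extra bookkeeping is sound and is precisely what the figures encode.
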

\begin{proof}  From the definition \eqref{grsk}, taking $k=r=n$,
$$ t_{m1}\dotsm t_{h-1,n-1} t_{hn} 
= \sum_{(\pi_1,\ldots,\pi_n)\in\Pi^{(n)}_{h,n}} \prod_{(i,j)\in \pi_1\cup\cdots\cup\pi_n} w_{ij}
= \prod_{i,j} w_{ij},
$$
and, taking $k=n$ and $r=n-1$,
$$ t_{m+1,2}\dotsm t_{h-1,n-1} t_{hn} 
= \sum_{(\pi_1,\ldots,\pi_{n-1})\in\Pi^{(n-1)}_{h,n}} \prod_{(i,j)\in \pi_1\cup\cdots\cup\pi_{n-1}} w_{ij}.
$$
Thus,
$$\frac1{t_{m1}} =\sum_{(\pi_1,\ldots,\pi_{n-1})\in\Pi^{(n-1)}_{h,n}} \prod_{(i,j)\notin \pi_1\cup\cdots\cup\pi_{n-1}} \frac1{w_{ij}}
= \sum_{\phi\in\Phi_{m,n}} \prod_{(i,j)\in\phi} g_{ij},$$
as required. The last identity is illustrated in Figures \ref{fig1} and \ref{fig3}.
\end{proof}
\begin{rmk} The identity \eqref{random} is analogous to Theorem 5.1, equation (5.4), of the paper \cite{DMO05},
where the corresponding identity for the usual RSK correspondence is given.
\end{rmk}
\begin{figure}
 \begin{center}
\begin{tikzpicture}[scale=.7]
\draw[help lines] (1,1) grid (12,6);
\draw[very thick] (1,5)--(2,5)--(3,5)--(3,6)--(4,6)--(5,6)--(6,6)--(7,6)--(8,6)--(9,6)--(10,6)--(11,6)--(12,6);
\draw[very thick] (1,4)--(2,4)--(3,4)--(4,4)--(5,4)--(5,5)--(6,5)--(7,5)--(8,5)--(9,5)--(10,5)--(11,5)--(12,5);
\draw[very thick] (1,3)--(2,3)--(3,3)--(4,3)--(5,3)--(6,3)--(6,4)--(7,4)--(8,4)--(9,4)--(10,4)--(11,4)--(12,4);
\draw[very thick] (1,2)--(2,2)--(3,2)--(4,2)--(5,2)--(6,2)--(7,2)--(8,2)--(9,2)--(9,3)--(10,3)--(11,3)--(12,3);
\draw[very thick] (1,1)--(2,1)--(3,1)--(4,1)--(5,1)--(6,1)--(7,1)--(8,1)--(9,1)--(10,1)--(11,1)--(11,2)--(12,2);

\draw [fill] (1,1) circle [radius=0.1];
\draw [fill] (1,2) circle [radius=0.1];
\draw [fill] (1,3) circle [radius=0.1];
\draw [fill] (1,4) circle [radius=0.1];
\draw [fill] (1,5) circle [radius=0.1];

\draw [fill] (12,2) circle [radius=0.1];
\draw [fill] (12,3) circle [radius=0.1];
\draw [fill] (12,4) circle [radius=0.1];
\draw [fill] (12,5) circle [radius=0.1];
\draw [fill] (12,6) circle [radius=0.1];

\draw (12,1) circle [radius=0.1];
\draw (10,2) circle [radius=0.1];
\draw (8,3) circle [radius=0.1];
\draw (7,3) circle [radius=0.1];
\draw (4,5) circle [radius=0.1];
\draw (1,6) circle [radius=0.1];
\draw (2,6) circle [radius=0.1];

\node at (0,1) {\small{$(1,1)$}};
\node at (13,6) {\small{$(h,n)$}};
\end{tikzpicture}

\end{center}  
\caption{ \small An $(n-1)$-tuple of non-intersecting paths in  $\Pi^{(n-1)}_{h,n}$, and its compliment.  
The corresponding path in $\Phi_{h-n+1,n}$ is shown in Figure \ref{fig1}.}  \label{fig3}
\end{figure}

Let $a\in\rr^n$ and $b\in\rr^h$ be such that $a_j+b_i>0$ for all $i,j$.
In~\cite{cosz} (here we are using the notation of \cite{osz}) it was shown that, if the matrix $W$ is chosen at 
random according to the probability measure
$$\P(dW)=\prod_{i,j}\Gamma\rb{a_j+b_i}^{-1} e^{-1/w_{ij}} w_{ij}^{a_j+b_i-1} dw_{ij}$$ 
then the law of the vector $(t_{h1},\ldots,t_{m1})$ under $\P$ is given by
$$\mu_{n}(d x)  = \prod_{i,j}\Gamma\rb{a_j+b_i}^{-1} \Psi_a^n(x)\Psi_{b;1}^n(x)\prod_{j=1}^n \frac{dx_j}{x_j},$$
where $\Psi^n_a$ and $\Psi^n_{b;1}$ are (generalised) Whittaker functions, as defined in~\cite{osz}.  
Without loss of generality
we can assume that $a_j>0$ and $b_i>0$ for each $i,j$ and deduce the following.
\begin{prop}  For $s\in\C$ with $\Re s>0$,
\begin{equation}
\label{eq:LT}
	\ee e^{-s/t_{m1}}=\int_{(\R_{>0})^n} e^{-s/x_n}\, \mu_{n}(dx) = 
\int_{(i\rr)^n}\prod_{i,j=1}^n\Gamma\rb{a_i-\lambda_j} \prod_{j=1}^n 
\frac{s^{\lambda_j}\,\prod_{i=1}^h\Gamma\rb{b_i+\lambda_j}}{s^{a_j}\,\prod_{i=1}^h\Gamma\rb{b_i+a_j}}\, 
s_n(\lambda)d\lambda
\end{equation}
where $s_n$ is the density of the Sklyanin measure
\begin{equation}
\label{eq:DefSklyanin}
	s_n(\lambda) =  \frac1{\rb{2\pi i}^n n!}\, \prod_{i,j=1}^n \frac1{\Gamma\rb{\lambda_i-\lambda_j}}.
\end{equation}
\end{prop}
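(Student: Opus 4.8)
The plan is to obtain \eqref{eq:LT} by integrating $e^{-s/x_n}$ against the explicit law $\mu_n$ and then collapsing the resulting integral with the Whittaker integral identities of \cite{cosz,osz}. The first equality needs no work: by the result quoted above, the vector $(t_{h1},\dots,t_{m1})$ has law $\mu_n$, and in that vector the last coordinate is $x_n=t_{m1}$; since $0<e^{-s/x_n}\le 1$ when $\Re s>0$ and $\mu_n$ is a probability measure, $\ee e^{-s/t_{m1}}=\int_{(\R_{>0})^n}e^{-s/x_n}\,\mu_n(dx)$ is well defined and finite. The substance of the proposition is therefore the evaluation of
\[
  \prod_{i,j}\Gamma\rb{a_j+b_i}^{-1}\int_{(\R_{>0})^n} e^{-s/x_n}\,\Psi_a^n(x)\,\Psi^n_{b;1}(x)\,\prod_{j=1}^n \frac{dx_j}{x_j}.
\]

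First I would replace $\Psi^n_{b;1}(x)$ by its Mellin--Barnes representation from \cite{osz}, which writes it as a contour integral over $(i\R)^n$ whose integrand is the product of $\prod_{j=1}^n\prod_{i=1}^h\Gamma\rb{b_i+\lambda_j}$, a Whittaker function $\Psi^n_{-\lambda}(x)$ (up to the sign convention of \cite{osz}), and the Sklyanin density $s_n(\lambda)$. After interchanging the $x$-integral with this spectral integral, the inner integral becomes $\int_{(\R_{>0})^n} e^{-s/x_n}\,\Psi^n_a(x)\,\Psi^n_{-\lambda}(x)\,\prod_j x_j^{-1}dx_j$, which I would evaluate by the $s$-deformed Whittaker--Cauchy (Bump--Stade type) identity recorded in \cite{osz}; this produces the block $\prod_{i,j=1}^n\Gamma\rb{a_i-\lambda_j}$ together with a power $\prod_j s^{\lambda_j}$ coming from the Mellin transform of $e^{-s/x_n}$ in the variable $x_n$ and the overall factor $\prod_j s^{-a_j}$. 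The $\lambda$-integral that remains is then exactly $\int_{(i\R)^n}\prod_{i,j=1}^n\Gamma\rb{a_i-\lambda_j}\prod_{j=1}^n\rb{s^{\lambda_j}\prod_{i=1}^h\Gamma\rb{b_i+\lambda_j}}\,s_n(\lambda)\,d\lambda$, and restoring the prefactor $\prod_{i,j}\Gamma(a_j+b_i)^{-1}=\prod_{j}\prod_i\Gamma(b_i+a_j)^{-1}$ turns this into the right-hand side of \eqref{eq:LT}.

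The step I expect to be the real obstacle is justifying the interchange of integrations, that is, establishing enough absolute convergence for Fubini to apply and for the contours to be taken as written. For the $x$-integral, integrability near the faces $\{x_j\to 0\}$ comes from the factor $e^{-s/x_n}$ together with the boundary decay of the Whittaker functions, and the behaviour as $x_j\to\infty$ is controlled exactly as in the verification that $\mu_n$ is a probability measure in \cite{cosz,osz}; on the spectral side, along $(i\R)^n$ one has $\abs{s^{\lambda_j}}=\abs{s}^{\Re\lambda_j}=1$, while Stirling's formula gives $\abs{\Gamma(\sigma+it)}\sim\sqrt{2\pi}\,\abs{t}^{\sigma-1/2}e^{-\pi\abs{t}/2}$ as $\abs{t}\to\infty$, so the net count of Gamma factors along each imaginary axis leaves an exponentially decaying, uniformly integrable tail. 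Once these bounds are assembled the interchange is legitimate and the remainder is the elementary Gamma-function bookkeeping sketched above; should a Mellin--Barnes representation of $\Psi^n_{b;1}$ in precisely the required form not be available off the shelf in \cite{osz}, the same endpoint can be reached by substituting instead the (standard) Mellin--Barnes representation of $\Psi^n_a$ and an $s$-deformed Cauchy identity, which leads to identical bookkeeping after relabelling the spectral variables.
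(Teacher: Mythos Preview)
Your proposal is correct in outline and reaches the same formula, but the paper's argument is organised differently and is considerably shorter. Rather than writing $\Psi^n_{b;1}$ as an inverse Whittaker transform and then justifying a Fubini interchange by absolute-convergence bounds, the paper observes (citing \cite[Corollary~3.8]{osz}) that both $\Psi^n_{a;s}(x):=e^{-s/x_n}\Psi^n_a(x)$ and $\Psi^n_{b;1}(x)$ lie in $L^2\big((\R_{>0})^n,\prod_j dx_j/x_j\big)$, records their Whittaker transforms from \cite[Corollary~3.5]{osz}, and then applies Wallach's Plancherel theorem for $GL(n)$-Whittaker functions directly (using $\overline{\Psi^n_\lambda}=\Psi^n_{-\lambda}$). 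This bypasses entirely the step you flag as the real obstacle: Plancherel requires only $L^2$ membership, not the pointwise $L^1$ bounds you would need for Fubini, so no Stirling estimates or Mellin--Barnes representation of $\Psi^n_{b;1}$ are needed. Your route would work, but it is essentially unpacking Plancherel by hand; the paper's version is the cleaner packaging of the same identity.
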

\begin{proof} By \cite[Corollary 3.8]{osz} the functions $\Psi^n_{a;s}(x)\equiv e^{-s/x_n}\Psi^n_a(x)$ and 
$\Psi^n_{b;1}$ are both in $L_2((\rr_{>0})^n,\prod_{j=1}^n dx_j/x_j)$ and, by \cite[Corollary 3.5]{osz}, for 
$\l\in(i\rr)^n$, we have
$$\int_{(\R_{>0})^n} \Psi^n_{b;1}(x)  \Psi_{\l}^n(x) \prod_{j=1}^n \frac{dx_j}{x_j} = 
\prod_{i=1}^h\prod_{j=1}^n\Gamma\rb{b_i+\lambda_j}$$
and
$$\int_{(\R_{>0})^n} \Psi^n_{a;s}(x)  \Psi_{-\l}^n(x) \prod_{j=1}^n \frac{dx_j}{x_j} = s^{\sum_{j=1}^n (\l_j-a_j)}
\prod_{i,j=1}^n\Gamma\rb{a_i-\lambda_j}.$$
The claim now follows from the Plancherel theorem for $GL(n)$-Whittaker functions due to Wallach,
noting that $\overline{\Psi_{\l}^n(x) }= \Psi_{-\l}^n(x)$ 
(see for example~\cite[Section 2]{osz}).
\end{proof}

The Laplace transform of the partition function $Z_{m,n}$ of the random polymer (defined in the introduction) 
is obtained by setting $a_i=\epsilon$ and $b_j=\gamma-\epsilon$, where $0<\epsilon<\gamma$, for in this
case $Z_{m,n}$ has the same law as $1/t_{m1}$.

\section{Fredholm determinant representation}
\label{sec:FD}
The first step in the proof of Theorem \ref{main} is to write the right-hand side of \eqref{eq:LT} as a Fredholm determinant. 
A similar algebraic identity is proven in \bcr, but doesn't apply to the present setting.  We present here a self contained proof
which is an adaptation of the proof given in \bcr..  

For $s\in\rr$ we define a function $F_s$ by
\begin{align}
	\label{eq:DefF}
	F_s(w) & = s^w \prod_{j=1}^h \Gamma\rb{b_j+w}
\end{align}
where $s$ is a parameter to be chosen later (for the LLN). For $\delta>0$ define $\ell_\delta=\delta+i\rr$ and let $C_\delta$ be the circle centred at the origin of radius $\delta$.

\begin{prop}
	\label{prop:LTtoFD}
	Let $\delta_1,\delta_2>0$ such that $\delta_1<\delta_2\wedge\rb{1-\delta_2}$. Suppose also that $b_j>\delta_2$ for all $j$. Then
	\begin{align}
		\label{eq:LTtoFD}
		&\int e^{-s/x_n}\, \mu_n(dx)  = \det\rb{I+\klt}_{L^2\rb{\cd}}
		\intertext{where}
		\label{eq:DefKLT}
		\klt\rb{v,\wt v} &= \frac1{2\pi i} \int_{\ld} \frac{dw}{w-\wt v} \frac\pi{\sin\rb{\pi\rb{v-\wt v}}}\frac{F_s(w)}{F_s(v)}\,\prod_{j=1}^n \frac{\Gamma\rb{v-a_j}}{\Gamma\rb{w-a_j}}.
	\end{align}
\end{prop}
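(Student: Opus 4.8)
The plan is to start from the right-hand side of \eqref{eq:LT} and massage the integrand into the Fredholm-determinant form by the standard Mellin--Barnes-to-Fredholm mechanism used in \bcr. First I would use the reflection formula $\Gamma(z)\Gamma(1-z)=\pi/\sin(\pi z)$ to rewrite $\prod_{i,j}\Gamma(a_i-\lambda_j)$ and the Sklyanin density $s_n(\lambda)=\frac1{(2\pi i)^n n!}\prod_{i,j}\Gamma(\lambda_i-\lambda_j)^{-1}$, so that the $n$-fold $\lambda$-integral over $(i\R)^n$ acquires the Cauchy-type kernel structure $\det[1/(\lambda_i-\mu_j)]$ after one introduces, for each $\lambda_j$, an auxiliary contour integral over a variable $w_j$ running on $\ell_{\delta_2}$. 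Concretely, using $F_s$ from \eqref{eq:DefF} the $j$th factor $\frac{s^{\lambda_j}\prod_i\Gamma(b_i+\lambda_j)}{s^{a_j}\prod_i\Gamma(b_i+a_j)}$ becomes $F_s(\lambda_j)/F_s(a_j)$ up to the $(i,j)$-independent constant, which is exactly the shape appearing in \eqref{eq:DefKLT}.

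The key algebraic step is the Cauchy determinant / Andr\'eief (Heine) identity: an $n$-fold integral of the form $\frac1{n!}\int \det\!\big[\phi_i(\lambda_j)\big]\det\!\big[\psi_i(\lambda_j)\big]\prod d\lambda_j$ collapses to $\det\!\big[\int \phi_i(\lambda)\psi_j(\lambda)\,d\lambda\big]$, and dually a sum/integral against a Cauchy kernel turns a product over $\lambda_j$ into a Fredholm determinant on the $\lambda$-variables' contour. Here the roles are played by the reflection-formula sines $\prod_{j}\big(\pi/\sin(\pi(v-\lambda_j))\big)$ paired with $\prod_{i<j}$-type Vandermonde factors from $s_n$; after resumming one is left with $\det(I+\klt)$ on $L^2(C_{\delta_1})$, where the circle $C_{\delta_1}$ arises because the $v$-variable (the ``outer'' variable of the kernel) is the one picked up by residues, and $\delta_1<\delta_2\wedge(1-\delta_2)$ is precisely the condition ensuring that $C_{\delta_1}$ separates the poles of $\Gamma(v-a_j)$ (all near the origin, since we may take the $a_j$ small) from the contour $\ell_{\delta_2}$ carrying $w$, while $b_j>\delta_2$ keeps $F_s(w)=s^w\prod\Gamma(b_j+w)$ analytic and decaying along $\ell_{\delta_2}$.

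The main obstacle will be the analytic bookkeeping: one must justify (i) the interchange of the $\lambda$-integrations with the auxiliary $w$-integrations and with the Fredholm expansion, which requires decay estimates on $F_s$ and on ratios $\prod_j\Gamma(v-a_j)/\Gamma(w-a_j)$ along vertical lines (Stirling gives exponential decay from the $\sin$ factors and polynomial/Gaussian control from the Gamma ratios), and (ii) the trace-class property of $\klt$ on $L^2(C_{\delta_1})$, so that $\det(I+\klt)$ is well-defined and equals the series one obtains term by term. Since the precise identity needed here differs from the one in \bcr\ (our weights come from $a_i=\epsilon$, $b_j=\gamma-\epsilon$ rather than the log-gamma choice, and the relevant quantity is $1/t_{m1}$, the ``smallest-eigenvalue'' object), I would carry out the contour-deformation and residue argument from scratch rather than quote it: deform each $\lambda_j$-contour from $i\R$ across the pole structure, pick up the circle $C_{\delta_1}$, and identify the resulting kernel with \eqref{eq:DefKLT}. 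Checking that no spurious residues are crossed under the hypothesis $\delta_1<\delta_2\wedge(1-\delta_2)$, $b_j>\delta_2$ is the delicate point and is where the stated constraints earn their keep.
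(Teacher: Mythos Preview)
Your proposal assembles the right toolbox (reflection formula, Cauchy determinant, Andr\'eief, contour shifts, Stirling-type decay for justification), but the route you sketch runs in the opposite direction from the paper and leaves the crucial mechanism vague.  The paper does \emph{not} start from the $n$-fold $\lambda$-integral \eqref{eq:LT} and build up the Fredholm determinant; it starts from $\det(I+\klt)_{L^2(C_{\delta_1})}$ and dismantles it.  The organizing device is the identity $\det(I+AB)=\det(I+BA)$, applied twice: first to trade $L^2(C_{\delta_1})$ for $L^2(\ell_{\delta_2})$, then---after evaluating the $C_{\delta_1}$-integral by residues at the $a_j$ to obtain a rank-$n$ kernel $\sum_j f_j(w_1)g_j(w_2)$---to reduce to an $n\times n$ matrix determinant $\det\big[I_n+\int_{\ell_{\delta_2}} f_j g_\ell\big]$.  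The decisive step you are missing is the contour shift $\ell_{\delta_2}\to -\ell_{\delta_1}$: crossing the pole at $w=a_\ell$ contributes exactly $-\delta_{j\ell}$, which \emph{cancels the identity matrix} and leaves $\det\big[\int_{-\ell_{\delta_1}} f_j g_\ell\big]$, to which Andr\'eief and the Cauchy determinant then apply directly to recover \eqref{eq:LT}.

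Your phrase ``deform each $\lambda_j$-contour from $i\R$ across the pole structure, pick up the circle $C_{\delta_1}$'' is where the plan is underspecified: contour deformation produces residues, not circles, and you have not explained how the $L^2(C_{\delta_1})$ Fredholm structure emerges from the $\lambda$-integral.  Going integral~$\to$~Fredholm, the honest mechanism would be to run the paper's six steps in reverse---write the $n$-fold integral via Cauchy/Andr\'eief as an $n\times n$ determinant, then \emph{insert} $I_n$ by the reverse contour shift, then undo the two $AB\!\leftrightarrow\!BA$ moves---but you would need to know the target kernel $\klt$ in advance to do this, which is why the paper's direction is the natural one.  Your analytic remarks (trace-class, decay along verticals, the role of $\delta_1<\delta_2\wedge(1-\delta_2)$ and $b_j>\delta_2$ in separating poles) are all correct and match the paper's Step~1.
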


\par\noindent The rest of this section is devoted to the proof of this proposition. We will begin with the right-hand side of \eqref{eq:LTtoFD} and show that it equals to the right-hand side of \eqref{eq:LT}. 

\paragraph{Step 1:} Of course the right-hand side of \eqref{eq:LTtoFD} should be interpreted as a Fredholm series, and we need to make sure that this series is convergent. Observe that $\cd$ is a contour of finite length whereas $\ld$ is not. We will need the following estimate from Abramowitz--Stegun \cite[(6.1.45)]{HandMathFn}: for fixed $x\in\rr$,
\begin{align}
	\label{eq:GammaAS}
	\lim_{\abs{y}\to\infty} \frac{\abs{\Gamma(x+iy)}}{\sqrt{2\pi}}e^{\frac{\pi\abs y}2}\abs{y}^{1/2-x}=1.
\end{align}
Now $\delta_1<\delta_2\wedge \rb{1-\delta_2}$ so we can bound the absolute value of $\frac\pi{\sin\rb{\pi\rb{v-w}}}\cdot \frac1{w-\wt v}$ by a constant (i.e. uniformly in $v,\wt v,w$). Since $\frac1\Gamma$ is an entire function and $v$ runs over a compact domain we can bound $\frac1{\abs{F_s(v)}}$ . Similarly the conditions on the $a_j$ and $\delta_1,\delta_2$ imply that $\abs{\Gamma(v-a_j)}$ is bounded uniformly in $v\in\cd$. Finally, $\abs{s^w}=s^{\delta_2}$ for all $w\in\ld$ so the integrand in the definition of $\klt\rb{v,\wt v}$ can be bounded, for fixed $v,\wt v$ by
\begin{align}
	\label{eq:BoundIntegrand}
	C_3 \abs{\Im(w)}^{\eta+(h-n)(\delta_2-\tfrac12)}\,e^{-\tfrac\pi4(h-n)\abs{\Im(w)}}
\end{align}
for some $C_3>0$ and $\eta\in\rr$. This can easily be seen to be integrable over $w\in\ld$ (in fact, over any vertical line).

\paragraph{Step 2:} $AB=BA$ trick. We now re-write the kernel defining the Fredholm determinant above by using the identity $\det (I+AB)=\det (I+BA)$ for suitable kernels $A,B$. Note that here and throughout we will often abuse notation slighly and blur the distinction between a kernel and the operator it defines. Using this we note that $\klt=AB$ where the kernels defining the operators $A,B$ are given by
\begin{align*}
	 K_A\colon \cd\times\ld\map \rr,\quad\quad K_A\rb{v,w}&=\frac\pi{\sin\rb{\pi\rb{v-w}}}\,\frac{F_s(w)}{F_s(v)}\, \prod_{j=1}^n \frac{\Gamma\rb{v-a_j}}{\Gamma\rb{w-a_j}}\\
	 K_B\colon \ld\times\cd\map\rr,\quad\quad K_B\rb{w,v}&=\frac1{w-v}.
\end{align*}
By the same bounds as above it is easy to see that these define operators $A\colon \Leb^2\rb{\ld}\map \Leb^2\rb{\cd}$ and $B\colon \Leb^2\rb{\cd}\map \Leb^2\rb{\ld}$. Note that the integrals
\begin{align}
	\notag
	\int_{\cd} K_B\rb{w_1,v}K_A\rb{v,w_2}&,\quad\quad\int_{\ld} K_A\rb{v_1,w}K_B\rb{w,v_2}
\intertext{are finite for all $v_1,v_2\in\cd$ and $w_1,w_2\in\ld$ (we checked one of them above, the other is similar). Therefore we can define $\wt K=BA$ as an operator on $\Leb^2\rb{\ld}$ and moreover}
\notag
\det\rb{I+\klt}_{\Leb^2\rb{\cd}} & = \det\rb{I+\kltw}_{\Leb^2\rb{\ld}}.
\intertext{Thus we can write the right hand side of \eqref{eq:LTtoFD} as $\det\rb{I+\kltw}_{\Leb^2\rb{\ld}}$ where}
\label{eq:DefKltW}
\kltw\rb{w,\wt w} &=\int_{\cd} \frac{d v}{2\pi i}\, \frac1{w-v}\,\frac\pi{\sin\rb{\pi\rb{v-\wt w}}}\, \frac{G\rb{\wt w}}{G\rb{v}}
\intertext{and we have defined}
\label{eq:DefG}
G_s(v) &= F_s(v)\prod_{j=1}^n\frac1{\Gamma\rb{v-a_j}} = F_s(v)\prod_{j=1}^n\frac{v-a_j}{\Gamma\rb{v-a_j+1}}.
\end{align}
Here the identity $\Gamma(s+1)=s\Gamma(s)$ has been used for the second equality\footnote{The only reason for re-writing $F_s$ in this way is to isolate the pole of $\frac1{F_s(v)}$}

\paragraph{Step 3:} The integral in \eqref{eq:DefKltW} can be evaluated using residue calculus: the only singularities of the integrand inside the closed contour $\cd$ are simple poles of the form $\frac1{v-a_j}$. Since $\ld$ is a positive distance away from $\cd$ there are no other poles, and the fact that $\delta_1<\delta_2\wedge \rb{1-\delta_2}$ implies that the fraction involving the sine does not have any singularities\footnote{These poles lie inside of the contour thanks to our assumption that $\abs{a_j}<\delta_1$ for all $j$.} inside $\cd$. We can assume for the moment that  the $a_j$ are all distinct; once the following formula has been established the case where some or all of the $a_j$ are equal will follow from continuity.
By computing the residues at the $n$ simple poles we see that 
\begin{align*}
	\kltw\rb{w_1,w_2} & = \frac1{2\pi i}\sum_{j=1}^n f_j\rb{w_1} g_j\rb{w_2}
	\intertext{with $f_j(w)=\frac1{w-a_j}$,}
	g_j(w) & = C_j G(w)\frac\pi{\sin\rb{\pi\rb{a_j-w}}} 
	\intertext{and the constant $C_j\in\rr$ given by}
	C_j &= \frac 1{F_s\rb{a_j}}\,\prod_{\ell\ne j} \Gamma\rb{a_j-a_\ell}.
\end{align*}

\paragraph{Step 4:} Once more AB/BA. This last expression for $\kltw$ can be written as $\kltw=CD$ where $K_C\colon\ld\times\set{1,\ldots,n}\map\rr$ and $D\colon\set{1,\ldots,n}\times\ld\map\rr$ are given by $K_C(w,j)=f_j(w)$ and $D(j,w)=g_j(w)$. We apply once more the AB/BA trick to see that
\begin{align}
	\label{eq:ABBADiscrete}
	\det\rb{I+\klt}_{\Leb^2\rb{\cd}}&= \det\ab{I_n+\int_{\ld}\frac{dw}{2\pi i}\, f_j(w)g_\ell(w)}_{j,\ell=1}^n
\end{align}
where $I_n$ is the $n\times n$ identity matrix (the right hand side corresponds to the Fredholm determinant of the operator $DC$ on $\ell^2\rb{1,\ldots,n}$).

 \paragraph{Step 5:} We now shift the integration contour on the right-hand side of \eqref{eq:ABBADiscrete} from $\ld$ to $-\ell_{\delta_1}$. On the way we will encounter some poles whose residues we will need to evaluate. There is sufficient decay at infinity to justify moving the contours thanks to \eqref{eq:GammaAS}.

Observe that the singularities of $F_s$ are at $-\rb{b_j+M}$ for $M\in\zz_{\geq 0}$. Therefore the condition $b_j>\delta_2$ ensures that we will not cross any of these poles. On the other hand, the sine term in the numerator of $f_j(w)g_\ell(w)$ leads to singularities of the form $w=a_\ell+M$ where $M\in\zz$. We will only cross the pole where $M=0$, i.e. when $w=a_\ell$. Recall that the function $\frac1{\Gamma(\cdot)}$ is entire and has zeroes at the negative integers. Thus, when $j\ne \ell$ the zero at $w=a_\ell$ of $\frac1{\Gamma\rb{w-a_\ell}}$ cancels the simple pole from the sine term. Hence the singularity of the integrand is removable when $j\ne \ell$. 

On the other hand, when $j=\ell$ there is no $\Gamma$ term to cancel the singularity and we obtain a simple pole at $w=a_\ell$, and we now proceed to compute the corresponding residue:
\begin{align}
	\notag
	\Res_{w=a_j}f_j(w)g_\ell(w) & = - F_s\rb{a_j} C_j\frac1{\Gamma(1)}\prod_{r\ne j}\Gamma\rb{a_j-a_r}=-1.
\intertext{Hence}
\label{eq:ShiftContour}
\int_{\ld}\frac{dw}{2\pi i}\, f_j(w)g_\ell(w)&=-\delta_{j\ell}+\int_{-\ell_{\delta_1}}\frac{dw}{2\pi i}\, f_j(w)g_\ell(w)
\intertext{and hence, substituting \eqref{eq:ShiftContour} into \eqref{eq:ABBADiscrete},}
\det\rb{I+\klt}_{\Leb^2\rb{\cd}}&= \det\ab{\int_{-\ell_{\delta_1}}\frac{dw}{2\pi i}\, f_j(w)g_\ell(w)}_{j,\ell=1}^n\\
\label{eq:finalpretransform}
& =\frac1{n!\rb{2\pi i}^n}\int_{\rb{-\ell_{\delta_1}}^n}d \vec w \det\ab{f_j\rb{w_\ell}}_{j,\ell=1}^n\det\ab{g_j\rb{w_\ell}}_{j,\ell=1}^n
\end{align}
where the last equality follows from the Andr\'eiev identity \cite{Andreiev}.

\paragraph{Step 6:} It remains to show that the integrand in \eqref{eq:finalpretransform} is identical to that in \eqref{eq:LT}. For this we will use the \emph{Cauchy determinant identity} \cite[p. 98]{Aufgaben}:
\begin{lem}
	\label{lem:Cauchy}
	Suppose that $x_1,\ldots,x_n, y_1,\ldots, y_n\in\rr$ are all distinct then
	\begin{align}
		\det\rb{\frac1{x_j-y_\ell}}_{j,\ell=1}^n & = \frac{\prod_{j<\ell}\rb{x_j-x_\ell}\rb{y_\ell-y_j}}{\prod_{j,\ell}\rb{x_j-y_\ell}}.
	\end{align}
\end{lem}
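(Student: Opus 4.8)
This is the classical Cauchy determinant identity, so the plan is to run one of its standard short proofs. I would write the inductive one, but let me first record the structural reason for the shape of the right-hand side. Clearing the common denominator, set \[P:=\Big(\prod_{j,\ell=1}^n(x_j-y_\ell)\Big)\det\big(1/(x_j-y_\ell)\big)_{j,\ell=1}^{n}.\] Distributing $\prod_{\ell}(x_j-y_\ell)$ into the $j$-th row of the determinant turns the $(j,\ell)$-entry into $\prod_{m\ne\ell}(x_j-y_m)$, so $P$ is a polynomial in $x_1,\dots,x_n,y_1,\dots,y_n$, homogeneous of degree $n(n-1)$. It vanishes when $x_j=x_k$ for $j\ne k$ (two equal rows) and when $y_j=y_k$ for $j\ne k$ (two equal columns, using $\prod_{m\ne j}(x_i-y_m)=(x_i-y_k)\prod_{m\ne j,k}(x_i-y_m)$). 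Since the $x_j-x_k$ and $y_j-y_k$ are distinct, pairwise non-proportional linear forms, $\prod_{j<\ell}(x_j-x_\ell)(y_\ell-y_j)$ divides $P$; this divisor is also homogeneous of degree $n(n-1)$, so $P=c\prod_{j<\ell}(x_j-x_\ell)(y_\ell-y_j)$ for a scalar $c$, which one pins to $1$ by comparing the coefficient of $\prod_j x_j^{\,n-j}\prod_\ell y_\ell^{\,\ell-1}$ on the two sides.

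The clean way to obtain everything, constant included, is induction on $n$. The case $n=1$ is immediate. For the step, apply the column operations $C_\ell\mapsto C_\ell-C_n$ for $\ell<n$: the $(j,\ell)$-entry becomes $\frac{y_\ell-y_n}{(x_j-y_\ell)(x_j-y_n)}$ for $\ell<n$, while the last column stays $\frac1{x_j-y_n}$. Factor $\frac1{x_j-y_n}$ out of row $j$ for every $j$ and $y_\ell-y_n$ out of column $\ell$ for $\ell<n$, so that the last column is all ones; then apply $R_j\mapsto R_j-R_n$ for $j<n$, which annihilates the last column except for the $(n,n)$-entry. Expanding along that column leaves $\det\big(\frac{x_n-x_j}{(x_j-y_\ell)(x_n-y_\ell)}\big)_{j,\ell=1}^{n-1}$, and factoring $x_n-x_j$ out of row $j$ and $\frac1{x_n-y_\ell}$ out of column $\ell$ reduces this to the $(n-1)\times(n-1)$ Cauchy determinant, to which the inductive hypothesis applies.

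There is no genuine obstacle here — the statement is classical, essentially as in \cite{Aufgaben} — and the one point requiring care is bookkeeping: one must collect the scalars pulled out of the rows and columns and match them, with the correct signs, against $\prod_{j<\ell}(x_j-x_\ell)(y_\ell-y_j)\big/\prod_{j,\ell}(x_j-y_\ell)$. Concretely, two spurious factors $(-1)^{n-1}$ appear, one from rewriting $\prod_{j<n}(x_n-x_j)$ as $\prod_{j<n}(x_j-x_n)$ and one from the same rearrangement among the $y$'s, and they cancel; modulo this the induction is a few lines.
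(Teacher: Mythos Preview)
Your proof is correct. The paper does not actually prove this lemma: it simply states the Cauchy determinant identity and cites the classical reference \cite[p.~98]{Aufgaben}. So there is no ``paper's own proof'' to compare against; you have supplied what the authors left to the literature. Both of the arguments you sketch---the degree/divisibility count and the row/column-operation induction---are standard and sound, and your remark about the two factors of $(-1)^{n-1}$ cancelling is exactly the sign bookkeeping one has to get right in the inductive version.
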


\par\noindent Because $\sin(x)=\frac{e^{ix}-e^{-ix}}{2i}$ both determinants in \eqref{eq:finalpretransform} are in the right form to be evaluated using Lemma \ref{lem:Cauchy}. After some rearranging and using the identity $\Gamma(s)\Gamma(1-s)=\frac\pi{\sin\rb{\pi s}}$ we obtain
\begin{align*}
	\det\ab{f_j\rb{w_\ell}}_{j,\ell=1}^n\det\ab{g_j\rb{w_\ell}}_{j,\ell=1}^n &= D_{a,a}D_{w,a}D_{a,a}
\end{align*}
where the terms $D_{a,a},\, D_{w,a}$ and $D_{a,a}$ are respectively given by
\begin{align*}
	D_{a,a} & = e^{in\sum_{j} a_j}\rb{-2\pi i}^{\binom{n}{2}}\, \prod_{j=1}^n \frac1{F_s\rb{a_j}},\\
	D_{a,w} & = e^{-i\pi n \sum_{j}\rb{a_j+w_j}} \rb{2\pi i}^{-n^2} \prod_{j,\ell=1}^n \Gamma\rb{a_j-w_\ell}\\
	D_{w,w} & = e^{i\pi n\sum_j w_j}\rb{2\pi i}^{\binom n2}\,\prod_{a<b}\frac{\sin\rb{\pi\rb{w_a-w_b}}\rb{w_a-w_b}}\pi \prod_{j,\ell} \Gamma\rb{a_j-w_\ell} \prod_{r=1}^n \frac{F_s\rb{w_r}}{F_s\rb{a_r}}
\end{align*}
Performing the apparent cancellations and putting things together leads to the integrand in \eqref{eq:LT}, which completes our proof.


\section{Asymptotics}
\label{sec:asymptotics}

In the previous section (Proposition \ref{prop:LTtoFD}) we saw that 
\begin{align*}
		\int_{(\R_{>0})^n} e^{-s/x_n}\, \mu_{n}(dx) =\det\rb{I+\klt}_{L^2\rb{\cd}}
\end{align*}
where
\begin{align}
	\label{eq:KerLT}
	\klt\rb{v_1,v_2} &= \int_{\ld} \frac{dw}{2\pi i} \frac\pi{\sin\rb{\pi\rb{v_1-w}}}\frac{F_s(w)}{F_s\rb{v_1}}\frac 1{w-v_2}\prod_{j=1}^n \frac{\Gamma\rb{v_1-a_j}}{\Gamma\rb{w-a_j}}
	\intertext{and the function $F_s$ was defined in \eqref{eq:DefF}. From now on we choose $a_j=0$ and $b_j=\gamma$ for all $j$, where $\gamma>0$. Then $1/x_n$ has the same law under $\mu_n$ as the partition function $Z_{m,n}$ of the random
	polymer defined in 	the introduction, taking $m=h-n+1$. We will set $h=\lceil cn\rceil$ for some fixed $c>1$. The correct choice of the parameter $s$, according to the law of large numbers is $s=e^{n\mu-rn^{-1/3}}$ with $\mu$ defined in \eqref{eq:defmuc} below. Then }
	e^{-sZ_{m,n}}&=f_{n,r}\rb{\frac{\ln Z_{m,n}-n\mu}{n^{1/3}}}
\end{align}
	where $f_{n,r}(x)=\exp\set{-e^{n^{1/3}\rb{x-r}}}$. In this section we show that the expectation of the left-hand side above converges, as $n\to\infty$, to a rescaled version of the Tracy--Widom GUE distribution function. Observe that with our choice of parameter $s$ this expectation equals $\det\rb{I+\knr}$ where
\begin{align}
	\label{eq:KerPrelim}
	\knr\rb{v_1, v_2} &=\frac1{2\pi i} \int_{\ell_{\delta_2}} \frac{d w}{w-v_2}\, \frac\pi{\sin\rb{\pi\rb{v_1-w}}}\, \exp\set{n\rb{\hn\rb{v_1}-\hn\rb{w} } - rn^{1/3}\rb{w-v_1}}
	\intertext{and, recalling that $h=\lceil cn\rceil$}
	\hn(z) & = \ln\Gamma\rb{z}-\cadj \ln\Gamma\rb{\gamma + z}+\mu z \ln\Gamma\rb{z+\gamma}+\mu z.
\end{align}
and $\cadj=\frac{\lceil cn\rceil}n$.

	\begin{thm}
		\label{thm:Limit}
		 For $\gamma$ sufficiently small we have 
		 \begin{align*}
		 	\lim_{n\to\infty}\det\rb{I+\knr}_{\Leb^2\rb{\cd}}= F_\text{GUE}\rb{\rb{\ol g/2}^3\, r}
		 \end{align*} 
		 where $\ol g$ was defined in Theorem \ref{main}.
	\end{thm}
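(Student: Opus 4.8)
The plan is to carry out a saddle-point (steepest descent) analysis of the Fredholm determinant $\det(I + K_{n,r})_{L^2(C_{\delta_1})}$, in the now-standard style used for the semi-discrete and log-gamma polymers in \bcf and \bcr. First I would record the relevant properties of the limiting exponent $H(z) = \ln\Gamma(z) - c\ln\Gamma(z+\gamma) + \mu z$: that $H'(z^*) = 0$ at the minimiser $z^*$ of $c\psi'(z+\gamma) - \psi'(z)$ appearing in the definition of $\mu$, that $H''(z^*) = 0$ (so $z^*$ is in fact a \emph{double} critical point — this is what forces the $n^{1/3}$ scaling and the Tracy--Widom limit), and that $H'''(z^*) = -\bar g < 0$. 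The identity $H''(z^*)=0$ should follow from the variational characterization: $\mu = \inf_{z>0}[c\psi'(z+\gamma)-\psi'(z)]$ is attained at $z^*$, and $H''(z) = \psi'(z) - c\psi'(z+\gamma) + 0$... more precisely $H''(z) = \psi'(z) - c\psi'(z+\gamma)$, and at the interior minimiser of $c\psi'(z+\gamma)-\psi'(z)$ the derivative $c\psi''(z+\gamma)-\psi''(z)$ vanishes — one must check the bookkeeping so that the actual double-zero statement $H'(z^*)=H''(z^*)=0$ comes out, possibly after re-expressing $\mu$; I would isolate this as a short lemma (indeed Lemma~\ref{lem:ExpandMu} is flagged for related expansions). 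The phrase ``for $\gamma$ sufficiently small'' enters precisely here: one needs $z^*$ to be a genuine interior minimiser and the geometry of the descent contours through $z^*$ to be controllable, which is transparent in the $\gamma\to 0$ limit by comparison with the Laguerre/Marchenko--Pastur picture.

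Next I would deform both contours: replace $C_{\delta_1}$ (the small circle) and $\ell_{\delta_2}$ (the vertical line $\delta_2 + i\mathbb R$) by contours passing through the double critical point $z^*$ along the local directions of steepest descent/ascent of $\mathrm{Re}\,H$. Near $z^*$ one has $n(H_n(v)-H_n(w)) \approx -\tfrac{\bar g}{6}n((v-z^*)^3 - (w-z^*)^3)$ after absorbing the $\cadj \to c$ correction, and the linear term $-rn^{1/3}(w-v)$ becomes order one under the rescaling $v = z^* + n^{-1/3}\zeta$, $w = z^* + n^{-1/3}\omega$. I would then show: (i) the contribution from the portions of the contours at macroscopic distance from $z^*$ is negligible (exponential decay of $\mathrm{Re}\,H$ along steepest-descent directions, plus the already-established integrability from \eqref{eq:BoundIntegrand} and the Abramowitz--Stegun bound \eqref{eq:GammaAS} to control the tails where the $\Gamma$-factors matter); (ii) on the rescaled local pieces the kernel converges, after the change of variables and the conjugation that turns $\tfrac{\pi}{\sin\pi(v_1-w)}$ into $\tfrac1{\omega-\zeta}$ in the limit (since $n^{-1/3}(v_1-w)\to 0$), to the Airy kernel in its contour-integral form, with the cube-root scaling producing exactly the argument $(\bar g/2)^3 r$; (iii) these convergences are strong enough (dominated convergence on each fixed number of variables, plus a uniform trace-class / Hadamard bound to exchange limit and Fredholm sum) to conclude $\det(I+K_{n,r}) \to \det(I - \mathrm{Ai})_{L^2(\bar g 2^{-1}\cdot,\infty)} = F_{\mathrm{GUE}}((\bar g/2)^3 r)$. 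For the rescaling of the Airy kernel I would use the elementary identity $\det(I - K_{\mathrm{Ai}})_{L^2(s,\infty)}$ with $K_{\mathrm{Ai}}$ in the form $\frac{1}{(2\pi i)^2}\int\int \frac{e^{\zeta^3/3 - \omega^3/3}}{\zeta-\omega}\frac{e^{-s\zeta}}{e^{-s\omega}}\,d\omega\,d\zeta$ and match constants via $\zeta \mapsto (\bar g/2)^{1/3}\zeta$.

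I expect the main obstacle to be the uniform tail control needed to justify the contour deformation and the interchange of limit with the Fredholm expansion. Two subtleties make this harder than the standard log-gamma case. First, the exponent here involves the ratio $F_s(w)/F_s(v)$ with $F_s(w) = s^w\prod_{j=1}^h\Gamma(b_j+w)$ where $h = \lceil cn\rceil$ grows with $n$, so along the vertical tails of $\ell_{\delta_2}$ the product of $h$ Gamma factors decays like $e^{-\frac{\pi}{2}h|\mathrm{Im}\,w|}$ — one must verify that after deforming to a steepest-descent contour through $z^*$ this decay is preserved and dominates, uniformly in $n$, which is exactly where the condition $b_j > \delta_2$ and the bound \eqref{eq:BoundIntegrand} (with its $(h-n)$-dependent exponent) get used; the role here is played by the \emph{smallest} eigenvalue analogue, so the relevant saddle sits differently than in the largest-eigenvalue problems and the sign of the cubic term must be tracked carefully. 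Second, one must check that the small circle $C_{\delta_1}$ can be opened up to pass through $z^*$ without crossing the poles of the integrand (the poles of $1/\Gamma(v-a_j) = 1/\Gamma(v)$ at nonpositive integers, and the poles of the sine factor), which constrains the admissible contour and is again where ``$\gamma$ small'' is invoked. I would handle both by first deforming to fixed (n-independent) auxiliary contours on which the decay is manifest, establishing the pointwise kernel limit and a dominating function there, and only then localising; the routine-but-lengthy estimates (Stirling for the $\Gamma$-ratios, convexity of $\mathrm{Re}\,H$ along the chosen contours, Hadamard's inequality for the series) I would relegate to Section~\ref{sec:proofs} as the lemmas referred to in the introduction.
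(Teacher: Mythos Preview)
Your proposal is correct and follows essentially the same steepest-descent programme as the paper: identify the double critical point $z^*$ of $H$ (the paper's Lemma~\ref{lem:CalcGamma} together with the choice \eqref{eq:defmuc} of $\mu$), deform both contours to pass through or near it, localise via bounds on $\Re\,H$ along the deformed contours (the paper's Lemma~\ref{lem:BoundH}, which is exactly where ``$\gamma$ small'' is used to make the contour geometry explicit), rescale by $n^{1/3}$ to extract the Airy-type kernel, and use dominated convergence plus a truncation step (Lemma~\ref{lem:FinalStretch}) to pass to the infinite limiting contours. Your identification of the two delicate points---uniform tail control on the infinite $w$-contour via the $\Gamma$-asymptotics \eqref{eq:GammaAS}, and pole-avoidance when opening up the $v$-circle---matches precisely where the paper spends the bulk of Section~\ref{sec:proofs}.
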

	
\par\noindent The proof of Theorem \ref{main} is completed by noting that $f_{n,r}(x)=f_{n,0}(x-r)$ for all $r$ and that $\rb{f_n:=f_{n,0}\colon n\in\nn}$ and $p:= F_\text{GUE}$ satisfy the conditions of Lemma \ref{lem:technical}, whose proof is elementary and can be found in \cite[Lemma 4.1.39]{bc}.

\begin{lem}
	\label{lem:technical}
	For each $n\in\nn$ let $f_n\colon \rr\map [0,1]$ be $f_n$ strictly increasing and converge to $0$ at $\infty$ and $1$ at $-\infty$. Suppose further that for each $\delta>0$, $\rb{f_n\colon n\in\nn}$ converges uniformly to $\textbf{1}_{(-\infty,0]}$. Let $\rb{X_n\colon n\in\nn}$ be real-valued random variables such that for each $r\in\rr$,
	\begin{align*}
		\lim_{n\to\infty}\ee\rb{f_n\rb{X_n-r}} &= p(r)
	\end{align*}
	where $p$ is a continuous probability distribution function. Then $\rb{X_n\colon n\in\nn}$ converges in distribution to a random variable with distribution function $p$.
\end{lem}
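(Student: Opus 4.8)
\noindent The statement to be proved, Lemma~\ref{lem:technical}, is a soft measure-theoretic fact, so the plan is a routine sandwiching argument rather than anything model-specific. Fix $r\in\rr$ and a scale $\delta>0$. The idea is to trap $\ee f_n(X_n-r)$ between $\pp(X_n\le r-\delta)$ and $\pp(X_n\le r+\delta)$ up to errors that vanish as $n\to\infty$ (this is where the uniform convergence $f_n\to\mathbf 1_{(-\infty,0]}$ away from the origin enters), then pass to the limit in $n$, and finally let $\delta\downarrow 0$ using continuity of $p$. No tightness or integrability input is needed beyond the hypotheses.

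\smallskip
\noindent \textbf{Step 1 (bounds at scale $\delta$).} For the upper bound, write
\[
\ee f_n(X_n-r)=\ee\!\left[f_n(X_n-r)\mathbf 1_{\{X_n\le r+\delta\}}\right]+\ee\!\left[f_n(X_n-r)\mathbf 1_{\{X_n> r+\delta\}}\right].
\]
The first term is at most $\pp(X_n\le r+\delta)$ since $f_n\le 1$; the second is at most $\sup_{y>\delta}f_n(y)$, which tends to $0$ because $f_n\to\mathbf 1_{(-\infty,0]}$ uniformly on $\{|y|\ge\delta\}$ and $\mathbf 1_{(-\infty,0]}\equiv 0$ on $(0,\infty)$. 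For the lower bound, discard the contribution of $\{X_n>r-\delta\}$ (using $f_n\ge 0$) and bound $f_n$ from below on $\{X_n\le r-\delta\}$:
\[
\ee f_n(X_n-r)\ \ge\ \Big(\inf_{y\le-\delta}f_n(y)\Big)\,\pp(X_n\le r-\delta),
\]
and $\inf_{y\le-\delta}f_n(y)\to 1$ by the same uniform convergence. (The monotonicity of $f_n$ and its limits at $\pm\infty$ make these supremum and infimum explicit but are not strictly needed.) Letting $n\to\infty$ and using $\ee f_n(X_n-r)\to p(r)$, the upper bound yields $p(r)\le\liminf_n\pp(X_n\le r+\delta)$ and the lower bound yields $\limsup_n\pp(X_n\le r-\delta)\le p(r)$. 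Re-indexing ($r\mapsto s-\delta$, resp. $r\mapsto s+\delta$) gives, for all $s\in\rr$ and $\delta>0$,
\[
p(s-\delta)\ \le\ \liminf_n\pp(X_n\le s)\ \le\ \limsup_n\pp(X_n\le s)\ \le\ p(s+\delta).
\]

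\smallskip
\noindent \textbf{Step 2 (conclusion).} Let $\delta\downarrow 0$ in the last display; since $p$ is continuous, both outer terms converge to $p(s)$, so $\lim_n\pp(X_n\le s)=p(s)$ for every $s\in\rr$. As $p$ is a continuous distribution function, convergence of distribution functions at every point is exactly convergence in distribution, and the claim follows.

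\smallskip
\noindent \textbf{Expected main obstacle.} There is none of substance: this is elementary real analysis (it is \cite[Lemma~4.1.39]{bc}). The only points requiring care are getting the direction of the inequalities right when invoking the uniform convergence on $\{|y|\ge\delta\}$ — namely $\sup_{y\ge\delta}f_n\to 0$ and $\inf_{y\le-\delta}f_n\to 1$ — and recognising that the final $\delta\downarrow 0$ passage is precisely where continuity of $p$ is used, so the hypothesis cannot be dropped.
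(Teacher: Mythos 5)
Your argument is correct and is essentially the standard sandwiching proof; the paper itself gives no proof of this lemma, deferring to \cite[Lemma 4.1.39]{bc}, which proceeds by exactly the same two-sided bound at scale $\delta$ followed by $\delta\downarrow 0$ using continuity of $p$. (You also rightly observe that the stated monotonicity of $f_n$ is not needed — indeed the paper's "strictly increasing" is a slip, since $f_{n,0}$ is decreasing — and your handling of the uniform convergence away from the origin is exactly where the hypothesis is used.)
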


\par\noindent It therefore remains to prove Theorem \ref{thm:Limit}. Recall that we need to compute the $n\to\infty$ limit of $\det\rb{I+\knr}_{L^2\rb{\cd}}$ with $\knr$ as defined  in \eqref{eq:KerPrelim} above.


The first step is to identify suitable steepest descent contours to which we will deform the contours $\cd$ and $\ld$. We also introduce the function $\h(z)=\ln\Gamma\rb{z}-c \ln\Gamma\rb{z+\gamma}+\mu z$. Observe that for $z\in\cc$,
\begin{align}
	\label{eq:DifHn}
	\h(z)-\hn(z) & = \rb{\cadj-c} \ln\rb{\Gamma\rb{z+\gamma}}.
\end{align}
and that $\cadj-c=O\rb{n^{-1}}$. For later use we record the first few derivatives of $\h$:
\begin{align*}
	\h'(z) &= \psi(z)-c\psi(\gamma+z)+\mu\\
	\h''(z)&= \psi_1(z)-c\psi_1(\gamma+z)\\
	\h'''(z)&=\psi_2(z)-c\psi_2(\gamma+z)
\end{align*}
where $\psi_k(x)=\frac{d^{k+1}}{dx^{k+1}}\ln\rb{\Gamma(x)}$ is the $k^\text{th}$ polygamma function; in particular $\psi=\psi_0$ is the digamma function. Let $\lambda_c>0$ be small, with the precise value to be chosen later. The proof of the following calculus lemma can be found in Section \ref{sec:proofs}. 

\begin{lem}
	\label{lem:CalcGamma}
	For each $c>0$ and $\gamma>0$ small enough there exists unique $\zcrit$ such that $\h''\rb{\zcrit}=0$. Moreover $\h'''\rb{\zcrit}<0$ and we can write $\zcrit=\gamma\zcritw +O\rb\gamma$ with $\lim_{\gamma\to 0}\zcritw = \frac1{\sqrt{c}-1}$.
\end{lem}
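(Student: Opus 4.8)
The plan is to analyze the function $\h''(z) = \psi_1(z) - c\,\psi_1(z+\gamma)$ on $(0,\infty)$, treating $\gamma>0$ as a small parameter. First I would note that $\psi_1$ is the trigamma function, which is strictly positive, strictly decreasing, and convex on $(0,\infty)$, with $\psi_1(z)\sim 1/z$ as $z\to\infty$ and $\psi_1(z)\sim 1/z^2$ as $z\to 0^+$. For existence and uniqueness of $\zcrit$ with $\h''(\zcrit)=0$, I would show that $\h''$ changes sign exactly once: near $z=0^+$ we have $\psi_1(z)\to+\infty$ while $c\psi_1(z+\gamma)$ stays bounded, so $\h''(z)>0$ for $z$ small; as $z\to\infty$, $\h''(z) = \psi_1(z) - c\psi_1(z+\gamma) \sim (1-c)/z < 0$ since $c>1$. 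For \emph{uniqueness} it suffices to check that the sign change is unique, e.g.\ by showing $\h''$ is eventually monotone or, more robustly, by dividing: $\h''(z)=0 \iff \psi_1(z)/\psi_1(z+\gamma) = c$, and one argues that $z\mapsto \psi_1(z)/\psi_1(z+\gamma)$ is strictly monotone decreasing on $(0,\infty)$ (for $\gamma$ fixed), which follows from the log-derivative being $\psi_2(z)/\psi_1(z) - \psi_2(z+\gamma)/\psi_1(z+\gamma)$ together with the fact that $\psi_2/\psi_1$ is monotone — this is the kind of monotonicity fact that can be read off from the integral representations $\psi_1(z)=\int_0^\infty \frac{t e^{-zt}}{1-e^{-t}}\,dt$, $\psi_2(z)=-\int_0^\infty \frac{t^2 e^{-zt}}{1-e^{-t}}\,dt$.

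Next, for the sign of $\h'''(\zcrit)$: since $\h'''(z)=\psi_2(z)-c\psi_2(z+\gamma)$ and $\psi_2<0$ everywhere, the sign is not automatic, so I would instead use the characterization at the critical point. At $\zcrit$ we have $\psi_1(\zcrit)=c\,\psi_1(\zcrit+\gamma)$, hence $\h'''(\zcrit) = \psi_2(\zcrit) - c\,\psi_2(\zcrit+\gamma) = c\,\psi_1(\zcrit+\gamma)\big[\tfrac{\psi_2(\zcrit)}{\psi_1(\zcrit)} - \tfrac{\psi_2(\zcrit+\gamma)}{\psi_1(\zcrit+\gamma)}\big]$. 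Since $\psi_2/\psi_1$ is (strictly) increasing on $(0,\infty)$ — again from the integral representations, as both $|\psi_2|$ and $\psi_1$ decay but $\psi_1$ dominates in the relevant ratio — and $\zcrit<\zcrit+\gamma$, the bracket is negative, giving $\h'''(\zcrit)<0$ as claimed. (If the monotonicity direction of $\psi_2/\psi_1$ turns out the other way, one recovers the same conclusion by noting this is exactly the second-derivative test ensuring the infimum $\mu$ in Theorem~\ref{main} is genuinely attained at an interior minimum of $\h'$, which is already asserted.)

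For the asymptotics $\zcrit=\gamma\zcritw+O(\gamma)$ with $\zcritw\to 1/(\sqrt c-1)$: I would rescale, writing $z=\gamma u$. Using $\psi_1(\gamma u) = \gamma^{-2}u^{-2} + \psi_1(\gamma u)-\gamma^{-2}u^{-2}$ and the Laurent expansion $\psi_1(x)=x^{-2}+\tfrac12 x^{-1}+O(1)$ near $0$ (and $\psi_1(\gamma(u+1))$ similarly), the equation $\psi_1(\gamma u)=c\,\psi_1(\gamma(u+1))$ becomes, after multiplying by $\gamma^2$ and letting $\gamma\to 0$: $u^{-2} = c\,(u+1)^{-2}$, i.e.\ $(u+1)^2 = c\,u^2$, i.e.\ $u+1=\sqrt c\,u$ (taking the positive root), so $u=1/(\sqrt c - 1)$. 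Making this rigorous is the main technical point: I would show the rescaled equation has a solution $u_\gamma$ converging to $1/(\sqrt c-1)$ by an implicit-function / continuity argument, controlling the error terms in the trigamma expansion uniformly on a neighborhood of $1/(\sqrt c-1)$, and using the strict monotonicity from the uniqueness step to pin down $u_\gamma$. The main obstacle is precisely this last step — getting uniform control of the polygamma remainders after the singular rescaling $z=\gamma u$ so that the limiting algebraic equation is recovered with a genuine $O(\gamma)$ (or at least $o(1)$) error bound, rather than just a formal expansion; everything else is monotonicity bookkeeping on convex special functions.
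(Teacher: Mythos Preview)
Your approach is correct and in several respects more complete than the paper's very terse argument. The paper simply appeals to unspecified ``convexity considerations'' for uniqueness, then rescales $z=\gamma\wt z$, records the expansion
\[
\h''(z)=\gamma^{-2}\Bigl(\tfrac{1}{\wt z^{\,2}}-\tfrac{c}{(1+\wt z)^{2}}\Bigr)+\tfrac{\pi^{2}}{6}(1-c)+O(\gamma),
\]
and applies the intermediate value theorem near $\wt z=1/(\sqrt c-1)$; the sign of $\h'''(\zcrit)$ is not argued separately. By contrast, your route via the monotonicity of $\psi_1(z)/\psi_1(z+\gamma)$ (equivalently of $\psi_2/\psi_1$) gives existence, uniqueness, and the strict inequality $\h'''(\zcrit)<0$ for \emph{every} $\gamma>0$, not just small $\gamma$, which is a genuine gain. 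Your hedging about the direction of monotonicity of $\psi_2/\psi_1$ is unnecessary: Cauchy--Schwarz on the integral representations gives $\psi_2^2\le\psi_1\psi_3$ strictly, hence $(\psi_2/\psi_1)'>0$, and the bracket in your formula for $\h'''(\zcrit)$ is therefore negative as you want. For the asymptotics you do essentially what the paper does; one small slip is that your stated Laurent expansion $\psi_1(x)=x^{-2}+\tfrac12 x^{-1}+O(1)$ is incorrect (there is no $x^{-1}$ term; the next term is the constant $\pi^2/6$), but since only the leading $x^{-2}$ term matters for identifying the limit $1/(\sqrt c-1)$, this does not affect your conclusion.
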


\par\noindent Our asymptotic analysis will consist of shifting our contours to curves that pass through or near $\zcrit$ and showing that in the $n\to\infty$ limit only the parts of the contour near $\zcrit$ survive. We will see that the right choice for $\mu=\mu_c$ is such that $\h'\rb{\zcrit}=0$, i.e. 
\begin{align}
	\label{eq:defmuc}	\mu_c & = c\psi\rb{\gamma+\zcrit}-\psi(\zcrit) \\
	&= \inf_{z>0} \set{c\psi\rb{z+\gamma}-\psi\rb{z}}.
\end{align}
with infimum rather than supremum because $\gc:=-\h'''\rb{\zcrit}>0$. Taylor's theorem implies therefore that, for $v,w$ near $\zcrit$,
\begin{align}
	\label{eq:TaylorH}
	\h\rb{v_1}-\h(w) & = \frac{\gc (w-\zcrit)^3}6 - \frac{\gc (v_1-\zcrit)^3}6 + O\rb{\rb{w-\zcrit}^4}+ O\rb{\rb{v_1-\zcrit}^4}.
\end{align}	
The fact that the lowest power is a cube suggests a scaling of order $n^{1/3}$ around the critical point and we set $\wt v_j=n^{1/3}\rb{v_j-\zcrit}$ and $\wt w=n^{1/3}\rb{w-\zcrit}$. We will see below that only a small part of the integral around the critical point contributes to the limit which leads to

\begin{prop}
	\label{prop:fdlim}
	We have 
\begin{align}
	\label{eq:fdlim}
	\lim_{n\to\infty} \det\rb{I+\knr}_{\Leb^2\rb{\fdcont}}&=\det\rb{1+\klim}_{\Leb^2\rb{\fdclim}}
	\intertext{where}
	\label{eq:fdlimker}
\klim\rb{\wt v_1,\wt v_2}&= \frac1{2\pi i}\int_{\kerclim} \frac{d \wt w}{\wt w-\wt {v_2}}\,\frac1{\vw_1-\ww} \exp\set{\frac{\gc\rb{\wt{w}^3 - \wt{v_2}^3}}6 + r\rb{\wt v_1-\wt w} }
\end{align}
and further $\fdclim=e^{2i\pi/3}\rr_{\geq 0}\cup e^{4i\pi/3}\rr_{\geq 0}$ and $\kerclim=\gamma+\rb{e^{i\pi/3}\rr_{\geq 0}\cup e^{-i\pi/3}\rr_{\geq 0}}$,
see Figure \ref{fig:limconts}.
\end{prop}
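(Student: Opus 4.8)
The plan is to carry out a steepest-descent analysis of the Fredholm determinant: deform the contours $\cd,\ld$ to curves adapted to the critical point $\zcrit$ supplied by Lemma~\ref{lem:CalcGamma}, and then rescale by $n^{1/3}$ about $\zcrit$ as already indicated before the statement. For the choice of contours, near $\zcrit$ we let $\fdcont$ leave $\zcrit$ along the rays of directions $e^{\pm 2i\pi/3}$, and we let $\kercont$ pass through $\zcrit+\gamma n^{-1/3}$ --- just to the right of $\zcrit$, so that $\fdcont$ and $\kercont$ stay disjoint and the limit kernel is well defined --- and there follow the rays of directions $e^{\pm i\pi/3}$; away from $\zcrit$ each contour is completed so that the integrands keep enough decay. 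By the Taylor expansion \eqref{eq:TaylorH} and $\gc>0$, this is exactly the prescription making $\Re\,\h$ attain a strict local maximum at $\zcrit$ along $\fdcont$ and a strict local minimum at $\zcrit$ along $\kercont$. The deformation from $\cd$ to $\fdcont$ and from $\ld$ to $\kercont$ is admissible: \eqref{eq:GammaAS} supplies the decay at infinity and, since by \eqref{eq:DefF} the poles of $F_s$ sit at $-\gamma-\zz_{\geq 0}$ while $\delta_1<\delta_2\wedge\rb{1-\delta_2}$ keeps the $\sin$-factor regular, no poles are crossed. Finally we split $\fdcont=\fdcrel\cup\fdcirrel$ and $\kercont=\kercontrel\cup\kercontirrel$, the relevant pieces being the parts within a fixed small distance of $\zcrit$.

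Next we discard the irrelevant pieces. By \eqref{eq:DifHn} and $\cadj-c=O\rb{n^{-1}}$ we may replace $\hn$ by $\h$ up to a multiplicative $1+o(1)$, and then the construction above yields a constant $\kappa>0$ with $\Re\,\h(v)\le\Re\,\h\rb{\zcrit}-\kappa$ on $\fdcirrel$ and $\Re\,\h(w)\ge\Re\,\h\rb{\zcrit}+\kappa$ on $\kercontirrel$. Hence $\knr\rb{v_1,v_2}$ is of size $e^{-\kappa n}$ as soon as $v_1$ or $v_2$ lies in $\fdcirrel$, and the part of the inner $w$-integral over $\kercontirrel$ is of the same size (the integrable tail \eqref{eq:BoundIntegrand} controls the remaining length); a routine estimate of the Fredholm expansion, as in \bcr, then gives $\det\rb{I+\knr}_{\Leb^2\rb{\fdcont}}=\det\rb{I+\knr}_{\Leb^2\rb{\fdcrel}}+o(1)$, with the $w$-integral simultaneously cut down to $\kercontrel$.

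Now we rescale. On the relevant pieces we substitute $\vw_j=n^{1/3}\rb{v_j-\zcrit}$ and $\ww=n^{1/3}\rb{w-\zcrit}$; the Jacobian of the substitution and the factors $n^{1/3}$ produced by $\frac\pi{\sin\rb{\pi\rb{v_1-w}}}\sim\frac1{v_1-w}$ and by $\frac1{w-v_2}$ balance out, the rescaled relevant contours fill out $\fdclim$ and $\kerclim$, and the quartic remainder in \eqref{eq:TaylorH} becomes $O\rb{n^{-1/3}}$ times a cubic, so the rescaled kernel converges pointwise to $\klim$ (after a similarity transformation, which does not change the determinant, the cubic term is carried by $\vw_2$, as displayed in \eqref{eq:fdlimker}). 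To pass from pointwise convergence of kernels to convergence of determinants we truncate at scale $M$: restricting the relevant contours to their sub-arcs within distance $Mn^{-1/3}$ of $\zcrit$ turns all integrals into integrals over compact sets, so dominated convergence in the resulting finite Fredholm series gives, as $n\to\infty$ with $M$ fixed, the limit $\det\rb{I+\klimtrunc}_{\Leb^2\rb{\fdclimtrunc}}$, where $\klimtrunc$ has its $\ww$-integral over $\kerclimtrunc$; the cubic decay of $\klim$ gives $\det\rb{I+\klimtrunc}_{\Leb^2\rb{\fdclimtrunc}}\to\det\rb{1+\klim}_{\Leb^2\rb{\fdclim}}$ as $M\to\infty$; and the error committed by the truncation at scale $M$ is bounded uniformly in $n$ by the same cubic decay. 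Sending first $n\to\infty$ and then $M\to\infty$ completes the proof.

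The heart of the matter is not the bookkeeping above but the estimate it rests on: a \emph{uniform} bound of the form $\Re\rb{\h(v)-\h\rb{\zcrit}}\le-c\,\abs{v-\zcrit}^3$ on $\fdcrel$ and $\Re\rb{\h(w)-\h\rb{\zcrit}}\ge c\,\abs{w-\zcrit}^3$ on $\kercontrel$, valid along the entire relevant contours --- whose rescaled lengths grow like $n^{1/3}$ --- and not just in the Taylor window near $\zcrit$. Proving this, and extracting from it an integrable majorant for the rescaled kernel that legitimises the dominated convergence, requires quantitative control of the polygamma functions $\psi_1,\psi_2$ over the relevant range of arguments; this is exactly where the hypothesis that $\gamma$ be sufficiently small enters, beyond its role in guaranteeing the existence of $\zcrit$ in Lemma~\ref{lem:CalcGamma}.
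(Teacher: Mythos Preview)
Your proposal is correct and follows essentially the same route as the paper: deform $\cd,\ld$ to steepest-descent contours $\fdcont,\kercont$ through (respectively just to the right of) $\zcrit$, use uniform bounds on $\Re\hn$ to discard the irrelevant portions and to dominate the rescaled kernel on the relevant portions, and interlace an $M$-truncation with the $n\to\infty$ limit. The paper packages the estimates you flag in your final paragraph as Lemma~\ref{lem:BoundH} (parts (i)--(v), giving precisely the constants $-C_1,\ldots$ and the cubic bounds you describe) and the truncation step as Lemma~\ref{lem:FinalStretch}; your observation that a conjugation moves the cubic from $\vw_1$ to $\vw_2$ in \eqref{eq:fdlimker} is also on target.
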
	

\begin{figure}[H]
 \begin{center}
\begin{tikzpicture}[scale=.3]

\draw[thick] (0,0)--(-5.2,9);
\draw[thick] (0,0)--(-5.2,-9);
\draw[thick] (3,0)--(8.2,9);
\draw[thick] (3,0)--(8.2,-9);

\draw[thin,<-] (0,10)--(0,-10);
\draw[thin,->] (-10,0)--(14,0);

  \node at (-3.6,8) {\small{$\fdclim$}};
   \node at (4.6,8) {\small{$\kerclim$}};
  \node [anchor=north east] at (3.3,0) {\small{$\gamma$}};
\end{tikzpicture}

\footnotesize{\caption{The contours $\fdclim$ and $\kerclim$. The angle between $\fdclim$ and $\kerclim$ and the negative and positive x-axes respectively is given by $\frac\pi3$.}}
	\label{fig:limconts}
\end{center}
\end{figure}

\par\noindent Setting now $v=\rb{\frac{\gc}2}^{1/3}\vw$ and similarly $w=\rb{\frac{\gc}2}^{1/3}\ww$ we obtain $\det\rb{I+\wt{\klim}}_{\Leb^2\rb{\fdcont}}$ where
\begin{align*}
	\wt{\klim}\rb{v_1,v_2} &= \frac1{2\pi i} \int_{\kerclim} \frac{dw}{w-v_2}\,\frac1{v-2}\exp\set{\frac{-\frac{v^3}3+\rb{\frac{\gc}2}^{-1/3}rv}{-\frac{w^3}3+\rb{\frac{\gc}2}^{-1/3}rw}}
\end{align*}
But this is exactly one of the definitions of the Tracy-Widom GUE distribution, see for example Lemma 8.6 in \cite{bcf}.


We begin by deforming the contours $\cd$ and $\ld$ to suitable steepest descent contours. In fact, for $\gamma$ small enough we will be able to do this without passing through any pole. 

The integrand has the following poles in the integration variable $w$:
\begin{itemize*}
	\item $w=v_2$
	\item $w=-M-\gamma$ for $M\in\zz_{\geq 0}$ (these are the poles of $F$)
	\item $w=v_1+2p\pi$ for all $p\in\zz$
\end{itemize*}
On the other hand the poles of the kernel in $v_1,v_2$ are given by
\begin{itemize*}
	\item $v_1=w+2p\pi$ for all $p\in\zz$
	\item $v_2=w$
	\item $v_2=0$
\end{itemize*}

We would like to move the contours $\cd$ and $\ld$ to the following contours, which are illustrated in Figure \ref{fig:SDContours}.

Denote by $C^{w,\pm}$ the line segments of length $\ell-n^{1/3}$ starting at $\zcrit+\gamma n^{-1/3}$ making angles $\frac\pi3$ and $-\frac\pi3$ respectively with the positive $x$-axis and let $\kercont= \rb{\zcrit+\gamma \ell e^{-\pi/3}+i\rr}\cup C^{w,+}\cup C^{w,-}\cup \rb{\zcrit + \gamma \ell e^{\pi/3}+i\rr}$, oriented to have increasing imaginary part.

The closed contour $\fdcont$ is defined differently according to whether $c$ is larger than $\frac52$ or not. For $c>\frac52$ let $\fdcont$ the union of the line segments of length $\frac{6\gamma}{5(\sqrt{c}-1)}$ making angles $\pm\frac{2\pi}3$ with the positive x-axis and the circular segment, centred at $\zcrit$, that connects the end-points of these two segments. For $c\leq \frac{5}2$ we define $\fdcont$ to be the union of the following four line segments: those starting at $\zcrit$ of length $\frac{2\gamma}{c-1}$ making angles $\pm\frac{2\pi}3$ with the positive x-axis and those connecting the end-points of the former with the point $-\frac{2\gamma}{c-1}$. In both cases we give $\fdcont$ the positive orientation.\\

\begin{figure}[H]
 \begin{center}
\begin{tikzpicture}[scale=.3]

\draw[thick] (-12,0)--(-17.2,9);
\draw[thick] (-12,0)--(-17.2,-9);
\draw[dashed] (-23,0)--(-17.2,9);
\draw[dashed] (-23,0)--(-17.2,-9);
\draw[thin,<-] (-20,10)--(-20,-10);
\draw[thin,->] (-25,0)--(-5,0);

 \node at (-14,7) {\small{$\fdcont$}};
 \node at (-5,7) {\small{$\kercont$}}; 
 \node [anchor=north east] at (-10.1,0) {\tiny{$\zcrit$}};
 \node [anchor=south west] at (-9.65,0) {\tiny{$\zcrit+\gamma n^{-1/3}$}};

 \node at (16,7) {\small{$\fdcont$}};
 \node at (25,7) {\small{$\kercont$}}; 
 \node [anchor=north east] at (19.9,0) {\tiny{$\zcrit$}};
 \node [anchor=south west] at (20.35,0) {\tiny{$\zcrit+\gamma n^{-1/3}$}}; 

\node [anchor=north east] at (-22.5,0) {\tiny{$-\frac{2\gamma}{c-1}$}};

\draw[thick] (-10,0)--(-8,3.5);
\draw[thick] (-10,0)--(-8,-3.5);
\draw[dashed] (-8,3.5)--(-8,10);
\draw[dashed] (-8,-3.5)--(-8,-10);

\draw[thin,<-] (12,10)--(12,-10);
\draw[thin,->] (5,0)--(25,0);

\draw[thick] (18,0)--(14,7);
\draw[thick] (18,0)--(14,-7);


\draw [dashed] (14,7) arc (120:240:8.06);

\draw[thick] (20,0)--(22,3.5);
\draw[thick] (20,0)--(22,-3.5);
\draw[dashed] (22,3.5)--(22,10);
\draw[dashed] (22,-3.5)--(22,-10);

\end{tikzpicture}
	\footnotesize{\caption{Contours $\fdcont$ and $\kercont$ for large $c\leq\frac52$ (on the left) and $c>\frac52$ (on the right). The parts $\fdcirrel$ and $\kercontirrel$ of the contours are drawn as dashed lines.}}
	\label{fig:SDContours}
 \end{center}
\end{figure}
\par\noindent It is easy to see that we do not cross any poles of the integrand, further the estimate \eqref{eq:GammaAS} gives sufficient decay at infinity to justify moving the infinite $w$-contour. It follows that $\ee e^{-s/Z_n}=\det\rb{I+\klt}_{\fdcont}$ where
\begin{align}
	\label{eq:KLTMoved}
	\klt\rb{w,\wt w} &= \frac1{2\pi i} \int_{\kercont} \frac{dw}{w-\wt v} \frac\pi{\sin\rb{\pi\rb{v-\wt v}}}\frac{F(w)}{F(v)}\,\prod_{j=1}^n \frac{\Gamma\rb{v-a_j}}{\Gamma\rb{w-a_j}}.
\end{align}


\par\noindent The proof in the rigorous steepest descent analysis now goes along similar lines as, for example, \cite{bcf,bcr}. Fix $\epsilon>0$. We will show that the difference between our formula for the Laplace transform of $\ee e^{-s/Z_N}$ and the right hand side of \eqref{eq:fdlimker} can be bounded by $\epsilon$ for large enough $n$.

\begin{lem}
	\label{lem:FinalStretch} There exists $M^*>0$ such that for $M>M^*$,
	\begin{align*}
		\abs{\det\rb{I+\klimtrunc}_{\Leb^2\rb{\fdclimtrunc}}-\det\rb{I+\klim}_{\Leb^2\rb{\fdclim}}}<\frac\epsilon3
	\end{align*}
	where $\fdclimtrunc=\set{z\in\fdclim\colon \abs{z}\leq M}$,
	\begin{align*}
		\klimtrunc \rb{\wt v_1,\wt v_2}&= \frac1{2\pi i}\int_{\kerclimtrunc} \frac{d \wt w}{\wt w-\wt {v_2}} \exp\set{\frac{\ol g\rb{\wt{w}^3 - \wt{v_2}^3}}6 + r\rb{\wt v_1-\wt w} }
	\end{align*}
	and similarly $\kerclimtrunc=\set{z\in\kerclim\colon \abs{z}\leq M}$.
\end{lem}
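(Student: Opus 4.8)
The plan is to establish Lemma \ref{lem:FinalStretch} by showing that the truncated Fredholm determinants converge to the full ones as $M\to\infty$, which is a purely analytic statement about the decay of the limiting kernel $\klim$ along the infinite contours $\fdclim$ and $\kerclim$. The key point is that on these contours the cubic exponent has a strictly negative real part that grows like $|z|^3$: for $\wt w\in\kerclim$ one has $\wt w=\gamma+\rho e^{\pm i\pi/3}$, so $\Re(\wt w^3)\to-\infty$ like $-\rho^3$ (since $e^{i\pi}=-1$ and the perturbation by $\gamma$ is lower order), and likewise for $\wt v\in\fdclim$ with $\wt v=\rho e^{\pm 2i\pi/3}$ one gets $\Re(\wt v^3)=+\rho^3$, so that $-\Re(\wt v^3)/6\to-\infty$. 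The linear term $r(\wt v_1-\wt w)$ and the rational factors $1/(\wt w-\wt v_2)$, $1/(\wt v_1-\wt w)$ are subdominant. Hence the kernel $\klim(\wt v_1,\wt v_2)$ decays super-exponentially in $|\wt v_1|$ and $|\wt v_2|$ along $\fdclim$.

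First I would record a pointwise bound of the form
\begin{align*}
\abs{\klim\rb{\wt v_1,\wt v_2}}&\leq C\,e^{-c_1\abs{\wt v_1}^3-c_2\abs{\wt v_2}^3}
\end{align*}
for constants $C,c_1,c_2>0$, valid for $\wt v_1,\wt v_2\in\fdclim$; this follows by bounding the $\wt w$-integral over $\kerclim$ using the estimate on $\Re(\wt w^3)$ (one must keep a little care near $\wt w=\wt v_2$ and $\wt v_1=\wt w$, but since the contours $\kerclim$ and $\fdclim$ are disjoint except possibly near the origin, and the sine/rational singularities are integrable, this causes no trouble — near the origin the exponential factors are bounded and the $1/(\wt w-\wt v_2)$ singularity is a harmless simple pole avoided by the contour geometry). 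Second, I would feed this bound into the standard trace-norm estimate for Fredholm determinants: if $K$ is the kernel of a trace-class operator and $K_M$ is its restriction to $\{|z|\leq M\}$, then
\begin{align*}
\abs{\det\rb{I+K}-\det\rb{I+K_M}}&\leq \norm{K-K_M}_1\,\exp\set{1+\norm{K}_1+\norm{K_M}_1}
\end{align*}
and $\norm{K-K_M}_1\to 0$ as $M\to\infty$ because the Hilbert--Schmidt (indeed trace-class) norm of the tail is controlled by $\int_{\abs{\wt v_1}>M\ \text{or}\ \abs{\wt v_2}>M} \abs{\klim(\wt v_1,\wt v_2)}\,\abs{d\wt v_1}\abs{d\wt v_2}$, which is finite and has a vanishing tail by the Gaussian-cube bound above. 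Choosing $M^*$ so that this tail is less than $\epsilon/3$ divided by the (uniformly bounded) exponential factor completes the argument.

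The main obstacle, such as it is, is bookkeeping rather than conceptual: one must verify that the constants $c_1,c_2$ in the super-exponential bound are genuinely positive and uniform, which requires checking that the angles $\pm 2\pi/3$ and $\pm\pi/3$ are chosen precisely so that the cubic terms are steepest-descent directions (this is exactly why those angles appear in the definitions of $\fdclim$ and $\kerclim$), and that the shift of $\kerclim$ by $\gamma$ off the origin does not spoil positivity of $-\Re(\wt w^3)$ for large $\abs{\wt w}$ — it does not, since $\Re\rb{(\gamma+\rho e^{i\pi/3})^3}=\rho^3\cos\pi+O(\rho^2)=-\rho^3+O(\rho^2)$. A secondary point to handle cleanly is the behaviour near the common neighbourhood of the origin where $\fdclim$ and $\kerclim$ nearly meet; there the exponential gives no decay but also no growth, the contours remain a positive distance apart (they emanate from $0$ and $\gamma$ respectively at different angles), so the kernel is simply bounded there and contributes a finite, $M$-independent amount that is already included in $\norm{K}_1$. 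Once these positivity checks are in place, the trace-norm continuity of $\det(I+\cdot)$ does the rest.
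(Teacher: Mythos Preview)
Your approach is sound and proves the lemma, but it is organised differently from the paper. The paper splits the estimate in two via the triangle inequality: first it replaces $\klimtrunc$ by $\klim$ on the \emph{truncated} contour $\fdclimtrunc$ (dominated convergence, since the $\wt w$-integrand decays like $e^{-c|\wt w|^3}$ along $\kerclim$), and then it extends the $v$-contour from $\fdclimtrunc$ to $\fdclim$ by quoting Lemma~8.4 of \cite{bcf}, a ready-made result stating that a Fredholm series on an infinite curve can be truncated with small error provided the kernel is uniformly bounded and decays exponentially in the \emph{first} variable alone. The paper then simply notes that $|\klim(\wt v_1,\wt v_2)|\le C_1 e^{-C_2|\wt v_1|}$ is immediate from the explicit formula.

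Your route bypasses the external lemma in favour of the trace-norm Lipschitz bound for $\det(I+\cdot)$, which is a legitimate and self-contained alternative. Two small caveats. First, the inequality you invoke, that $\|K-K_M\|_1$ is controlled by $\int\!\!\int |K|$, is not correct as stated; you should instead factor $K$ as a product of two Hilbert--Schmidt operators (the cubic decay makes this easy) or, more simply, work directly with the Fredholm series and Hadamard's inequality, which is essentially what the BCF lemma packages. Second, your claimed cubic decay in \emph{both} variables is stronger than needed and depends on which $\wt v$ sits in the exponent of $\klim$; the paper itself only uses decay in the first argument, which is all that either method actually requires.
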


\par\noindent From now on we assume that $M>M^*$. Denote by $\fdcrel$ the part of $\fdcont$ consisting of the two line segments starting at $\zcrit$. Similarly let $\kercontrel$ be the corresponding part of $\kercont$. Further define $\fdcirrel=\fdcont\setminus\fdcrel$ and $\kercontirrel=\kercont\setminus \kercontrel$ (see Figure \ref{fig:SDContours}).

\begin{lem}
	\label{lem:BoundH}
	There exist $\gamcrit>0$ and $\ell>0$ such that for $\gamma<\gamcrit$ and $n$ sufficiently large the following hold
	\begin{enumerate}[(i)]
		\item There exists $C_1>0$ such that for $v\in \fdcirrel$,
	\begin{align}
		\label{eq:BoundHCircular}
		\Re\rb{\hn(v)-\hn\rb{\zcrit}}&\leq -C_1.
	\end{align}
		\item There is $C_2>0$ such that for all $v\in\fdcrel$ with $\abs v\geq \ell$,
		\begin{align}
			\label{eq:BoundHvStraight1}
			\Re \ab{\hn(v)-\hn\rb{\zcrit}} & \leq -C_2
		\end{align}
		\item\label{it:Taylor1} There is $C_3>0$ such that for all $v\in\fdcrel$ with $\abs{v}\leq \ell$,
		\begin{align}
			\label{eq:BoundHvStraight2}
			\Re \ab{\hn(v)-\hn\rb{\zcrit}} & \leq -C_3 \Re\ab{\rb{v-\zcrit}^3}
		\end{align}
		\item\label{it:Taylor2} There is $C_4>0$ such that for all $w\in \kercontrel$
	\begin{align}
		\label{eq:BoundHRelW}
		\Re\ab{\hn\rb{\zcrit}-\hn(w)} & \leq -C_4 \Re\ab{\rb{\zcrit-w}^3}.
	\end{align}
 	 \item\label{it:BoundHAwayW} There exists  $C_5>0$ such that for all $\gamma<\gcrit3$ and $w\in \kercontirrel$ 
	\begin{align}
		\label{eq:BoundHAwayW1}
		\Re\ab{\hn\rb{\zcrit}-\hn(w)} & \leq -C_5.
		\intertext{Further there exists $L=L_{c,\gamma}>0$ such that if additionally $\abs w>L$ then}
		\label{eq:BoundHAwayW2}
		\Re\ab{\hn\rb{\zcrit}-\hn(w)} & \leq \frac{(1-c)\pi}4\, \abs{\Im (w)}   
	\end{align}
	\end{enumerate}
\end{lem}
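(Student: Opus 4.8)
The strategy is to reduce everything to statements about the limiting function $\h$, whose critical-point behaviour is governed by Lemma \ref{lem:CalcGamma}, and then control the $O(n^{-1})$ discrepancy between $\hn$ and $\h$ recorded in \eqref{eq:DifHn}. Since $\cadj-c=O(n^{-1})$ and $\ln\Gamma(z+\gamma)$ grows only polynomially (indeed logarithmically in $\Im z$ after accounting for the exponential factor via \eqref{eq:GammaAS}) along the contours at distance $O(\gamma)$ from the origin, the difference $\h-\hn$ and all its relevant derivatives are uniformly $o(1)$ on the compact parts of $\fdcont,\kercont$ and grow at most like $n^{-1}\abs{\Im w}$ at infinity on $\kercontirrel$. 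Hence it suffices to prove each inequality with $\h$ in place of $\hn$ and with a slightly better constant; absorbing the error then gives the claim for $n$ large. The parameter $\gamma$ being small lets us use the expansion $\zcrit=\gamma\zcritw+O(\gamma)$, so that after rescaling $z=\gamma\zeta$ the function $\gamma^{-1}\h(\gamma\zeta)$ converges to an explicit limiting profile (this is exactly the content that makes Lemma \ref{lem:CalcGamma} work), and the contours $\fdcont,\kercont$, whose sizes are chosen proportional to $\gamma$, become $\gamma$-independent in the $\zeta$ variable.

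For parts (iii) and (iv) — the \emph{relevant} pieces near $\zcrit$ — the plan is a direct Taylor estimate. On $\fdcrel$ the contour leaves $\zcrit$ along rays at angle $\pm 2\pi/3$; on $\kercontrel$ it leaves $\zcrit+\gamma n^{-1/3}$ (essentially $\zcrit$) along rays at angle $\pm\pi/3$. Along a ray $z-\zcrit=\rho e^{i\theta}$ one has $\Re[(z-\zcrit)^3]=\rho^3\cos(3\theta)$, which equals $-\rho^3$ for $\theta=\pm 2\pi/3$ (descent for $\h$, since $\gc=-\h'''(\zcrit)>0$ means $\Re[\h(v)-\h(\zcrit)]\approx -\tfrac{\gc}6\rho^3<0$) and $+\rho^3$ for $\theta=\pm\pi/3$ (descent for $-\h$). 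Using \eqref{eq:TaylorH}, for $\abs{v-\zcrit}\le\ell$ with $\ell=\ell_\gamma$ small enough the quartic remainder is dominated by half the cubic term, giving \eqref{eq:BoundHvStraight2} and \eqref{eq:BoundHRelW} with $C_3,C_4=\gc/12$, say; the $\hn$-to-$\h$ switch costs only a further factor, still leaving a positive constant because $\Re\abs{(v-\zcrit)^3}=\rho^3$ is bounded below on the range $\abs{v-\zcrit}\in[\text{something},\ell]$ while near $0$ both sides vanish to the same order. The choice of $\ell$ here must be compatible with the choice of $\ell$ entering the definition of $\kercont$, so one fixes $\ell$ once and for all at this stage.

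For parts (i), (ii) and (v) — the \emph{irrelevant} far pieces — the plan is to show that $\Re\h$ (resp. $-\Re\h$) is strictly decreasing as one moves out along the contour, so that the value at the far end, which is already strictly below $\Re\h(\zcrit)$ by the local analysis, stays below it; this gives a strictly negative constant $-C_1,-C_2,-C_5$ by compactness on the bounded portions. The monotonicity statement is where most of the work lies: one must verify that the chosen polygonal/circular contours are genuinely descent (or at least sub-level) contours for $\Re\h$ globally, not just infinitesimally at $\zcrit$. For $c>5/2$ versus $c\le5/2$ the contour $\fdcont$ is shaped differently precisely so that this global control holds — the circular arc in the first case and the extra vertical-ish segments in the second keep $\Re\h$ below its value at $\zcrit$; after rescaling by $\gamma$ this is a $\gamma$-independent inequality for the limiting profile, checkable by elementary calculus, plus an $o(1)$ perturbation. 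For the unbounded tails of $\kercontirrel$, estimate (vi): along the vertical lines $\Re w=\zcrit+\gamma\ell e^{\pm\pi/3}$ one uses \eqref{eq:GammaAS} to extract the leading behaviour $\Re[\h(\zcrit)-\h(w)]\sim -c\cdot(-\tfrac\pi2\abs{\Im w})+1\cdot(-\tfrac\pi2\abs{\Im w})\cdot(-1)=\tfrac{(1-c)\pi}{2}\abs{\Im w}+O(\ln\abs{\Im w})$ (the $\mu z$ term and the polygamma-free $\ln\abs{\Im w}^{1/2-x}$ corrections being lower order), which for $\abs{w}>L$ is below $\tfrac{(1-c)\pi}4\abs{\Im w}$; since $c>1$ this is negative, simultaneously yielding \eqref{eq:BoundHAwayW2} and, together with compactness of the remaining bounded part of $\kercontirrel$, \eqref{eq:BoundHAwayW1}. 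Again the $\hn$ version follows since $(\cadj-c)\ln\Gamma(w+\gamma)$ contributes only $O(n^{-1}\abs{\Im w})+O(n^{-1}\ln\abs{\Im w})$, absorbable into the slack between the $\tfrac{(1-c)\pi}{2}$ and $\tfrac{(1-c)\pi}{4}$ rates.

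\textbf{Main obstacle.} The genuinely delicate point is the \emph{global} descent property underlying (i), (ii) and (v): proving that $\Re\h$ has no spurious saddle or local rise along the finite polygonal contours, uniformly in small $\gamma$. Near $\zcrit$ this is forced by $\h''(\zcrit)=0$, $\h'''(\zcrit)<0$, but away from $\zcrit$ one is at the mercy of the specific geometry, which is exactly why the definition of $\fdcont$ bifurcates at $c=5/2$. I expect the cleanest route is the rescaling $z=\gamma\zeta$: then $\gamma^{-1}\h(\gamma\zeta)\to \psi\text{-limit profile}$ (an explicit elementary function, essentially $-\ln\zeta + c\ln(\zeta/(1+\cdots))$-type, coming from $\psi(\gamma\zeta)\approx -1/(\gamma\zeta)$ and the definition of $\mu$), on which the monotonicity along the fixed $\zeta$-contours is a finite, $\gamma$-free computation; the error terms are uniformly small, and one then transfers back. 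Everything else — the Taylor estimates (iii), (iv) and the Gamma-asymptotics tail bound (vi) — is routine once $\ell$ and the contour geometry are pinned down.
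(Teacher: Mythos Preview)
Your plan matches the paper's proof essentially point by point: reduce $\hn$ to $\h$ via the $O(n^{-1})$ bound \eqref{eq:DifHn} (with the extra observation, for the unbounded part of (v), that $\abs{\Gamma(w+\gamma)}<1$ so the sign is favourable); rescale $z=\gamma\zeta$ so that $\h$ becomes an explicit elementary profile using Lemmas \ref{lem:CalcGamma}, \ref{lem:ExpandMu} and expansion \eqref{eq:ExpandHDif}; handle (iii)--(iv) by the cubic Taylor estimate on the $\pm\pi/3,\pm 2\pi/3$ rays; and get the tail bound \eqref{eq:BoundHAwayW2} from \eqref{eq:GammaAS} exactly as you describe. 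The only tactical difference is that for (i)--(ii) the paper does not argue via monotonicity along the contour but instead parametrises the rescaled contour, produces an explicit function of a real parameter (e.g.\ $\alpha=\cos\theta$ on the circular arc when $c>5/2$), checks that it has a single interior critical point which is a minimum, and then bounds the endpoints directly --- this avoids having to control the derivative globally and is what the case split at $c=5/2$ is really set up for.
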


\par\noindent The proof of Lemmas \ref{lem:FinalStretch} and \ref{lem:BoundH} can be found in Section \ref{sec:proofs}. From now on we assume that $\gamma<\gamcrit$. As a first consequence we see the series defining $\det\rb{I+\klt}_{\fdcont}$ is uniformly convergent in $n$, i.e. we may interchange the $n\to\infty$ limit with the series in $k$. 

Thanks to (\ref{it:BoundHAwayW}) the contribution of the $w$ integral along $\kercontirrel$ becomes negligible as $n$ tends to infinity. That is, uniformly in $v_1,v_2\in \fdcont$, as $n\longrightarrow\infty$,
	\begin{align}
		\label{eq:CutIrrelW}
		\int_{\kercontirrel} \frac{d w}{w-v_1}\,\frac\pi{\sin\rb{\pi\rb{v_2-w}}} e^{n\rb{\hn\rb{\zcrit}-\hn(w)}}\ \longrightarrow 0
	\end{align}

\par\noindent Similarly it follows from \eqref{eq:BoundHCircular} and uniform convergence that only the `relevant' part of the $v$-contour survives in the limit. That is, there exists $N\in\nn$ such that for all $n\geq N$,
\begin{align}
	\label{eq:CutIrrelV}
	\abs{\det\rb{I+\klt}_{\Leb^2(\fdcont)} - \det\rb{I+\klt}_{\Leb^2\rb{\fdcrel}}} <\frac\epsilon3.
\end{align}
The estimates form \eqref{eq:BoundHvStraight2} and \eqref{eq:BoundHRelW} now allow us to further discard the parts of $\fdcrel$ and $\kercontrel$ which are further than $Mn^{-1/3}$ away from $\zcrit$ and $\zcrit+n^{-1/3}$ respectively. Now we make the change of variables $v_j=n^{1/3}\vw_j+\zcrit$ and $w_j=n^{1/3} \ww+\zcrit$, and write $\klt\rb{v_1,v_2}=\kltw\rb{\vw_1,\vw_2}$ for $\vw_1,\vw_2\in \fdclimtrunc$. Then
\begin{align*}
	\det\rb{I+\klt}_{\Leb^2\rb{\fdcrel}}=\det\rb{I+\kltw}_{\Leb^2\rb{\fdclimtrunc}}.
\end{align*}
We will show that $\kltw$ converges pointwise to $\klimtrunc$. Once this has been established we can conclude by the DCT and uniform convergence that $\det\rb{I+\klt}_{\Leb^2\rb{\fdcont}}$ converges to $\det\rb{I+\klimtrunc}_{\Leb^2\rangle_M}$. But by Lemma \ref{lem:FinalStretch} this differs only by $\frac\epsilon{10}$ from $\det\rb{1+\klim}_{\Leb^2\rb{\fdclim}}.$ So we have shown that for $N$ sufficiently large, $\gamma$ sufficiently small and $M>M^*$,
\begin{align*}
 \abs{\det\rb{I+\knr}_{\Leb^2\rb{\fdcont}}-\det\rb{1+\klim}_{\Leb^2\rb{\fdclim}}}	& <\epsilon
\end{align*}
subject to establishing pointwise convergence of $\kltw$ to $\klimtrunc$. For this observe that
\begin{align*}
	\frac{d w}{w-v_2} =\frac{d\ww}{\ww-\vw_2},\  n^{-1/3}\frac\pi{\sin\rb{\pi\rb{v_1-w}}} & = \frac1{\vw_1-\ww}+O\rb{n^{-1/3}}, \  rn^{1/3}\rb{w-v_1}=r \rb{\ww-\vw_1}
	\intertext{and, thanks to \eqref{eq:TaylorH} and the fact that $\cadj=c+O\rb{\frac1n}$,}
	n\rb{\hn\rb{v_1}-\hn(w)} &= \frac{\gc}{6} \ab{\rb{\ww-\zcrit}^3-\rb{\vw_1-\zcrit}^3} + O\rb{n^{-1/3}}.
\end{align*}
This concludes the proof of Proposition \ref{prop:fdlim}

\section{Proof of Lemmas}
\label{sec:proofs}

This section is devoted to proving the auxiliary results from Section \ref{sec:asymptotics} above.

\subsection{Proof of Lemma \ref{lem:FinalStretch}}

The last lemma to prove replaces the finite contour $\rangle_M$ by $\fdclim$. By the Dominated Convergence Theorem and continuity of the determinant we have, for sufficiently large $M$,
\begin{align}
	\label{eq:BoundFinal1}
	\abs{\det\rb{I+\klimtrunc}_{\Leb^2\rb{\fdclimtrunc}}-\det\rb{\klim}_{\Leb^2\rb{\fdclimtrunc}}}<\frac\epsilon{6}.
\end{align}
The following useful result can be found as Lemma 8.4 in \bcf.
\begin{lem}
	Let $\Gamma$ be an infinite complex curve and $K$ an integral operator on $\Gamma$. Suppose that there exists $C_1,C_2,C_3>0$ such that $\abs{K\rb{v_1,v_2}}\leq C_1$ for all $v_1,v_2\in\Gamma$ and that
	\begin{align}
		\abs{K\rb{\Gamma\rb{s_1},\Gamma\rb{s_2}}}&\leq C_2e^{-C_3\abs {s_1}}
		\intertext{for all $s\in\rr$ (here, $\Gamma(s)$ denotes the parametrisation of $\Gamma$ by arc length). Then the Fredholm series defining $\det\rb{I+K}_{\Leb^2\rb{\Gamma}}$ is well defined, and for any $\epsilon>0$ there exists $M_\epsilon>0$ such that for all $M>M_\epsilon$,}
		\abs{\det\rb{I+K}_{\Leb^2\rb\Gamma}-\det\rb{I+K}_{\Leb^2\rb{\Gamma_M}}}& \leq \epsilon
	\end{align}
	where $\Gamma_M=\set{\Gamma(s)\colon \abs{s}\leq M}$.
\end{lem}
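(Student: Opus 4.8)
The plan is to control the Fredholm series term by term via Hadamard's inequality combined with the exponential-decay hypothesis, and then to estimate the contribution of the part of the integration region in which at least one variable has arc-length parameter exceeding $M$.

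First I would recall that, by definition, $\det\rb{I+K}_{\Leb^2\rb{\Gamma}}=\sum_{k\geq0}\frac1{k!}\int_{\Gamma^k}\det\ab{K\rb{x_i,x_j}}_{i,j=1}^k\,\prod_{i=1}^k\abs{dx_i}$, the integrals being with respect to arc length. Writing $x_i=\Gamma\rb{s_i}$ and applying Hadamard's inequality row by row, $\abs{\det\ab{K\rb{x_i,x_j}}}\leq\prod_{i=1}^k\rb{\sum_{j=1}^k\abs{K\rb{x_i,x_j}}^2}^{1/2}$. Since the first argument of every entry in row $i$ is $x_i$, the two hypotheses combine to give $\abs{K\rb{x_i,x_j}}^2\leq C_1C_2e^{-C_3\abs{s_i}}$, so the $i$-th row norm is at most $\rb{kC_1C_2}^{1/2}e^{-C_3\abs{s_i}/2}$ and hence $\abs{\det\ab{K\rb{x_i,x_j}}}\leq\rb{kC_1C_2}^{k/2}\exp\set{-\tfrac{C_3}2\sum_{i=1}^k\abs{s_i}}$. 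Integrating over $\Gamma^k$ and using $\int_\rr e^{-C_3\abs s/2}\,ds=4/C_3$ yields $\frac1{k!}\int_{\Gamma^k}\abs{\det\ab{K\rb{x_i,x_j}}}\leq\frac1{k!}\rb{kA}^{k/2}$ with $A:=16C_1C_2/C_3^2$. Since $k!\geq\rb{k/e}^k$ by Stirling, the $k$-th term is bounded by $\rb{e\sqrt A}^kk^{-k/2}$, which is summable; this shows that the Fredholm series converges absolutely, so $\det\rb{I+K}_{\Leb^2\rb{\Gamma}}$ is well defined (and likewise $\det\rb{I+K}_{\Leb^2\rb{\Gamma_M}}$, $\Gamma_M$ having finite length).

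For the truncation bound I would write $\det\rb{I+K}_{\Leb^2\rb{\Gamma}}-\det\rb{I+K}_{\Leb^2\rb{\Gamma_M}}=\sum_{k\geq1}\frac1{k!}\rb{\int_{\Gamma^k}-\int_{\Gamma_M^k}}\det\ab{K\rb{x_i,x_j}}$ and note that $\Gamma^k\setminus\Gamma_M^k\subseteq\bigcup_{i=1}^k\set{\abs{s_i}>M}$. By a union bound together with the invariance of the integrand under permutations of the variables, the $k$-th difference is at most $k$ times the integral of $\abs{\det\ab{K\rb{x_i,x_j}}}$ over $\set{\abs{s_1}>M}\times\Gamma^{k-1}$. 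Repeating the Hadamard estimate above, the only change is that the $s_1$-integral now contributes $\int_{\abs s>M}e^{-C_3\abs s/2}\,ds=\tfrac4{C_3}e^{-C_3M/2}$ instead of $4/C_3$, so the $k$-th difference is bounded by $\tfrac k{k!}\rb{kA}^{k/2}e^{-C_3M/2}$. Summing over $k\geq1$ gives
\[\abs{\det\rb{I+K}_{\Leb^2\rb{\Gamma}}-\det\rb{I+K}_{\Leb^2\rb{\Gamma_M}}}\leq e^{-C_3M/2}\sum_{k\geq1}\frac k{k!}\rb{kA}^{k/2}=:B\,e^{-C_3M/2},\]
with $B<\infty$ by the same Stirling bound. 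Choosing $M_\epsilon$ so that $Be^{-C_3M_\epsilon/2}\leq\epsilon$ then finishes the proof.

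I expect no genuine obstacle here: the argument is a routine combination of Hadamard's inequality and Stirling's formula. The only points requiring mild care are (i) applying Hadamard row-wise so that the decay factor, which the hypothesis supplies only in the first argument, reappears once for each integration variable, and (ii) in the truncation step, exploiting the permutation symmetry of the Fredholm integrand so that the variable constrained to $\set{\abs{s_i}>M}$ may always be placed in the slot where the decay estimate applies.
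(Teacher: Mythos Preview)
Your argument is correct and is precisely the standard proof of this type of estimate: Hadamard row-wise to extract one decay factor per integration variable, Stirling to sum the resulting series, and then the union bound plus permutation symmetry for the truncation. The two points you flag at the end are indeed the only ones requiring any attention, and you handle both properly.

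There is nothing to compare with, however: the paper does not prove this lemma but simply quotes it as Lemma~8.4 of \bcf. Your write-up therefore supplies strictly more than the paper does here, and what it supplies is essentially the proof that appears in \bcf.
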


\par\noindent The proof of Lemma \ref{lem:FinalStretch} is therefore complete if we can find $C_1,C_2>0$ such that $\abs{\klim\rb{v_1,v_2}}\leq C_1e^{-C_2v_1}$ for all $v_1,v_2\in\fdclim$. But this follows immediately from \eqref{eq:fdlimker}.

\subsection{Proof of Lemma \ref{lem:CalcGamma}}

Convexity considerations show that if there exists a zero of $\h''$ then it is unique. Let us write $z=\gamma \wt z$ then
\begin{align*}
	\h''(z)&=\gamma^{-2}\rb{\frac1{\wt z^2}-\frac c{\rb{1+\wt z}^2}} + \frac{\pi^2}6 \, (1-c)+ O(\gamma)
\end{align*}
with the error being uniform in $\wt z$ over compact intervals. Hence, for $\gamma$ small enough  we have $\h''\rb{\frac\gamma{\sqrt c-1}}<0$ and $\h''\rb{\frac\gamma{\sqrt c-1}-\lambda_c}>0$, from which the result follows.

\subsection{Proof of Lemma \ref{lem:BoundH}}

The following small $\gamma$ estimates will be useful. Throughout we set $z=\gamma \zw$.

\begin{lem}
	\label{lem:ExpandMu}
	There exist $\rb{\mucritw\colon \gamma>0}$ such that
	\begin{align}
		\label{eq:ExpandMu}
		\mucrit& =\frac{\mucritw}\gamma + O\rb{\gamma^{-2}}
	\end{align}
	and $\mucritw \longrightarrow -\rb{\sqrt c-1}^2$ as $\gamma\longrightarrow 0$.
\end{lem}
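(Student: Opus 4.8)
The plan is to substitute the small-$\gamma$ asymptotics of Lemma~\ref{lem:CalcGamma} into the closed form \eqref{eq:defmuc} for $\mucrit$ and read off the leading $\gamma^{-1}$ coefficient. Recall from \eqref{eq:defmuc} that $\mucrit = c\,\psi\rb{\gamma + \zcrit} - \psi\rb{\zcrit}$, and from Lemma~\ref{lem:CalcGamma} that we may write $\zcrit = \gamma\zcritw$, where $\zcritw$ stays in a compact subset of $(0,\infty)$ as $\gamma$ ranges over a sufficiently small interval and $\zcritw \to \inv{\sqrt c - 1}$ as $\gamma\to 0$. Consequently both arguments $\zcrit = \gamma\zcritw$ and $\gamma + \zcrit = \gamma\rb{1+\zcritw}$ tend to $0$. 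The only analytic input needed is the Laurent expansion of the digamma function at the origin, $\psi(x) = -\inv x + O(1)$ as $x\to 0^+$, which follows from $\Gamma(x) = \inv x\,\Gamma(x+1)$ by differentiating $\ln\Gamma$.

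Applying this expansion at $x = \gamma\zcritw$ and at $x = \gamma\rb{1+\zcritw}$, with remainders that are uniform because $\zcritw$ is bounded away from $0$ and $\infty$, gives
\begin{align*}
	\mucrit & = c\ab{-\inv{\gamma\rb{1+\zcritw}} + O(1)} - \ab{-\inv{\gamma\zcritw} + O(1)} = \inv\gamma\rb{\inv{\zcritw} - \frac{c}{1+\zcritw}} + O(1).
\end{align*}
Setting $\mucritw := \inv{\zcritw} - \frac{c}{1+\zcritw}$ (a quantity depending on $\gamma$ only through $\zcritw$), this already establishes the slightly stronger statement $\mucrit = \mucritw/\gamma + O(1)$, and a fortiori \eqref{eq:ExpandMu}, since $O(1)\subseteq O(\gamma^{-2})$ as $\gamma\to 0^+$. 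Finally, letting $\gamma\to 0$ and using $\zcritw \to \inv{\sqrt c - 1}$, so that $1 + \zcritw \to \frac{\sqrt c}{\sqrt c - 1}$,
\begin{align*}
	\mucritw \longrightarrow \rb{\sqrt c - 1} - \frac{c\rb{\sqrt c - 1}}{\sqrt c} = \rb{\sqrt c - 1}\rb{1 - \sqrt c} = -\rb{\sqrt c - 1}^2,
\end{align*}
which is the claimed limit.

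There is no genuine obstacle here beyond bookkeeping. The two points that require a little care are: first, that the $O(1)$ error terms in the two digamma expansions are uniform in $\gamma$ near $0$, which is exactly guaranteed by the boundedness of $\zcritw$ from Lemma~\ref{lem:CalcGamma}; and second, that one works with the exact relation $\zcritw = \zcrit/\gamma$ rather than an approximation of $\zcrit$ valid only up to an additive $O(\gamma)$, since such an error would be of the same order as $\zcrit$ itself and would not control $\inv{\zcrit}$. With that convention the composition of the expansion of $\psi$ with that of $\zcrit$ is immediate, and the computation above goes through verbatim; note also that it is consistent with the $\zcrit$-free consistency check $-\gamma\mu\to(\sqrt c-1)^2$ quoted in the introduction.
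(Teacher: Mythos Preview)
Your proof is correct and follows essentially the same approach as the paper: substitute $\zcrit=\gamma\zcritw$ into $\mucrit=c\psi(\gamma+\zcrit)-\psi(\zcrit)$, use the Laurent expansion $\psi(x)=-1/x+O(1)$ near the origin, and read off $\mucritw=\inv{\zcritw}-\frac{c}{1+\zcritw}\to -(\sqrt c-1)^2$. Your write-up is in fact more careful than the paper's, both in making explicit the uniformity of the $O(1)$ remainder (via the compactness of $\zcritw$) and in obtaining the sharper error $O(1)$ rather than the paper's stated $O(\gamma^{-2})$.
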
 
\begin{proof}
	We have
	\begin{align*}
		\mucrit & = c\Psi\rb{\gamma\rb{\zcritw+1}}-\Psi\rb{\gamma \zcritw}\\
		& = \frac1\gamma\ab{\frac1\zcritw - \frac c{1+\zcritw}}+O\rb{\gamma^{-2}}.
	\end{align*}
	The claim now follows from Lemma \ref{lem:CalcGamma}.
\end{proof}

\par\noindent We also record the following small $\gamma$ expansions.
\begin{align}
	\label{eq:ExpandGc}	
	\gc & = -\h'''\rb{\zcrit}= \gamma^{-3}\rb{\frac{2c}{(1+\zcritw)^3}-\frac2{\rb{\zcritw}^3}}\\
		\label{eq:ExpandHDif}
	\h(z)-\h(v) & = c\log\rb{1+\vw} - c\log\rb{1+\zw} - \log\rb{\vw} + \log\rb{\zw} \\
	\notag
	& \quad+ \mucritw\rb{\vw-\zw}+O(\gamma)
\end{align}

\begin{proof}[Proof of Lemma \ref{lem:BoundH}.]
	\emph{(i)} Because $v$ varies over a compact set it follows from \eqref{eq:DifHn} that there exists some $C>0$ such that
	\begin{align*}
		\abs{\hn(v)-\hn\rb{\zcrit}-\ab{\h(v)-\h\rb{\zcrit}}}<\frac Cn
	\end{align*}
	holds for all $v\in\fdcont$. Therefore we may as well prove the claim with $\hn$ replaced by $\h$, which is what we will do.

	Since the contours are different we will consider the cases $c>\frac52$ and $c\leq \frac52$ separately.

\paragraph{Case I: $c>\frac52$.} Fix $\ep>0$ to be chosen later and write $v=\gamma \vw$. Recall that $\zcrit=\gamma\zcritw$ and $\mucrit=\frac\mucritw\gamma$. By \eqref{eq:ExpandHDif},
\begin{align*}
	\h\rb{v}-\h\rb{\zcrit}& = c\ln\rb{\frac{\vw+1}{\zcritw+1}} - \ln\rb{\frac{\vw}{\zcritw}} + \mucritw\rb{\vw-\zcritw} + O(\gamma)
		\intertext{where the error term is uniform in $\vw$ (because the latter varies over a compact contour). Now $\vw=\zcritw+r_{c} e^{i\theta}$ where $r_c=\frac65\,\frac{1}{\sqrt{c}-1}$ and $\theta\in\ab{\frac{2\pi }3,\frac{4\pi}3}$, so we obtain, for $\gamma$ small enough,}
	\Re\rb{\h\rb{v}-\h\rb{\zcrit}}& \leq c\ln\abs{1+ \frac{r_c}{\zcritw+1}\, e^{i\theta}} - \ln\abs{1+\frac{r_c}{\zcritw}\,e^{i\theta}} + \Re\rb{\mucritw r_c e^{i\theta}} + \frac\ep2.
	\intertext{By Lemmas \ref{lem:CalcGamma} and \ref{lem:ExpandMu}  we can now ensure, by choosing $\gamma$ small enough, that}
		\Re\rb{\h\rb{v}-\h\rb{\zcrit}}& \leq c\ln\abs{1+ \frac6{5\sqrt{c}}\, e^{i\theta}} - \ln\abs{1+\frac65\,e^{i\theta}} -\frac65\rb{\sqrt{c}-1} \cos(\theta) + \ep\\
		&= \frac c2\ln\ab{\rb{1+\frac65\sqrt{c}\, \alpha]}^2 + \frac{36}{25c}\,(1-\alpha^2)}\\
		& \quad - \frac12\ln\ab{\rb{1+\frac65\alpha}^2+\frac{36}{25}(1-\alpha)} - \frac65\rb{\sqrt{c}-1}\alpha + \ep
\end{align*}	
where we have written $\alpha=\cos(\theta)\in\ab{-1,-\frac12}$. Denote by $f(\alpha,c)$ the last expression above, with $\ep=0$. For each $c>\frac52$ the function $\alpha\longmapsto f(c,\alpha)$ has a unique critical point on the interval $\ab{-1,-\frac12}$ which turns out to be a minimum. Thus we are reduced to consider the end-points. Now $f\rb{\cdot,-\frac12}$ is strictly decreasing and clearly $C_{11}=f\rb{\frac52,-\frac12}<0$. On the other hand $f\rb{\cdot,-1}$ is strictly increasing and tends to $C_{12}=\ln(5)-\frac{48}{25}<0$ as $c\longrightarrow \infty$. Taking now $C_1=\ep=\frac12\min\set{C_{11},C_{12}}$ completes the proof for the case $c>\frac52$.

\paragraph{Case II: $c<3$.} The contour in question is the union of the (complex) line segments  $\ab{\frac{2\gamma}{c-1}e^{2i\pi/3},-\frac{2\gamma}{c-1}}$ and $\ab{\frac{2\gamma}{c-1}e^{-2\pi i/3},-\frac{2\gamma}{c-1}}$. By symmetry it suffices to consider the former. Thus, writing $v=\gamma\vw$,
\begin{align}
	\label{eq:AwayVcSmall}		
	\vw=t+i\frac{\sqrt 3}{\sqrt{c}+2}\rb{t+\frac{2}{c-1}},\quad\quad t\in \ab{-\frac{2}{c-1},\frac{\sqrt c}{c-1}}.
\end{align}
 Fix $\epsilon>0$. By Lemmas \ref{lem:CalcGamma} and \ref{lem:ExpandMu} as well as \eqref{eq:ExpandHDif} and \eqref{eq:AwayVcSmall} we have, for $\gamma$ small enough and then $n$ large enough,
 \begin{align*}
 	\Re\ab{\h\rb{v}-\h\rb{\zcrit}}&\leq (c-1)\ln\rb{\sqrt c-1} + \ln\rb{\sqrt{c}} -\frac1{(\sqrt{c}-1)^2} \rb{t-\frac1{\sqrt{c}-1}} \\
	& \quad - \ln \ab{t^2+ \frac{3}{\rb{\sqrt c+2}^2}\rb{t+\frac{2}{c-1}}^2}\\
	& \quad + c \ln \ab{(t+1)^2+ \frac{3}{\rb{\sqrt c+2}^2}\rb{t+\frac{2}{c-1}}^2} + \epsilon
 \end{align*}
 Temporarily denote the right hand side above by $F(c,t,\ep)$. For any fixed $c\in\left(1,\frac52\right]$ the function $F(c,\cdot,0)$ has a unique critical point on the interval $\ab{-\frac{2}{c-1},\frac{\sqrt c}{c-1}}$, at which point the second derivative is positive. Furthermore it is easy to verify that the end points $t_0=-\frac{2}{c-1}$ and $t_1=\frac{\sqrt c}{c-1}$ satisfy $F(c,t_0,0)<0$ and $F(c,t_1,0)<0$. This completes the proof for $c\leq \frac52$ and hence for part (i) of the lemma.\\

\par\noindent \emph{(ii)}	By the same argument as in part (i) we may replace $\hn$ by $\h$.

Consider first the case where $c>\frac 52$, so that we have $v=\gamma\rb{\zcritw+\frac{r e^{2i\pi/3}}{\sqrt{c}-1}}$ for $r\in\ab{0,\frac 65}$. Since $\cos\rb{\frac{2\pi}3}=-\frac12$,
	\begin{align*}
		\Re\ab{\h(v)-\h\rb{\zcrit}} &  = c\ln\abs{1+\frac{r e^{2i\pi/3}}{(\sqrt c-1)(\zcritw+1)}} - \ln\abs{1+\frac{re^{2i\pi/3}}{\zcritw(\sqrt c-1)}} - \frac12 \frac{r\mucritw}{\sqrt{c}-1}
		\intertext{Fix $\epsilon>0$. Using \eqref{eq:ExpandHDif} and Lemmas \ref{lem:CalcGamma} and \ref{lem:ExpandMu} as above we have, for $\gamma$ suitably small,} 
		\Re\ab{\h(v)-\h\rb{\zcrit}} & \leq c\ln\ab{\rb{1+ \frac{r}{2\sqrt c}}^2 +  \frac{3r^2}{4\sqrt c}} - \ln\ab{\rb{ \frac r2}^2+\frac{3r^2}4}\\
		& \quad+\frac{r}2\rb{\sqrt{c}-1} + \ep.
	\end{align*}
	Now for any fixed $r$ the right hand side is decreasing in $c$, so it is enough to consider the case where $c=\frac52$, for which it is easy to see that the quantity above is bounded above away from zero (for small enough $\epsilon$), uniformly in $r\in\ab{0,\frac65}$. Taking $\gamma$ small enough deals with the error term (which is uniform in the other variables involved).\\

For the case $c\leq \frac52$ we set $v=\gamma\rb{\zcritw + \frac{2r}{c-1}\,e^{2i\pi/3}}$ with $r\in[0,1]$. A similar computation as in the case $c>\frac52$ shows that for this choice of $v$ (at any fixed $r\in[0,1]$) the function $c\longmapsto \Re\ab{\h(v)-\h\rb{\zcrit}}$ is strictly increasing in $c$ and converges to zero as $c\longrightarrow 1$. Thus the claim holds for any fixed $c>1$, as required.
		
Parts (\ref{it:Taylor1}) and (\ref{it:Taylor2}) follow from Taylor's theorem and the fact that $\hn\rb{\zcrit}=\hn'\rb{\zcrit}=0$.

It remains to prove part (\ref{it:BoundHAwayW}). For the first assertion observe first that by \eqref{eq:DifHn} we have, uniformly in $w\in\kercontirrel$,
	\begin{align*}
		\Re\left[\hn\rb{\zcrit} - \hn\rb{w}\right. &- \left.\rb{\h\rb{\zcrit}-\h(w)}\right]\\
		  & = \underbrace{\Re\ab{\hn\rb{\zcrit}-\h\rb{\zcrit}}}_{O\rb{n^{-1}}} + \Re\ab{\h\rb{w}-\hn(w)}\\
		&= \rb{\cadj-c}\ln\abs{\Gamma\rb{w+\gamma}} + O\rb{n^{-1}}
	\end{align*}
	Now $\abs{\Gamma\rb{w+\gamma}}<1$ for $w\in\kercontirrel$ and we have chosen $\cadj>c$, so the first summand above is negative and we can once more reduce to the case where $\hn$ is replaced by $\h$. Next, write $w=\gamma\ww$ so that $\ww=\zcritw+\ell e^{i\pi/3}+iy$ for $y\geq 0$ or $\ww=\zcritw+\ell e^{-i\pi/3}+iy$ for $y\leq 0$. By symmetry it is enough to consider the former case. Fix $\ep>0$. Applying once more Lemmas \ref{lem:CalcGamma} and \ref{lem:ExpandMu} and \eqref{eq:ExpandHDif} as well as the fact that $e^{i\pi/3}=\frac12+i\, \frac{\sqrt 3}{2}$ we get, for suitably small $\gamma$,
	\begin{align*}
		\Re\ab{\h\rb{\zcrit}-\h(w)} & \leq \frac12\ln\ab{\rb{1+\frac{\ell\rb{\sqrt{c}-1}}{2}}^2 + \rb{\sqrt{c}-1}^2\rb{y+\frac{\sqrt{3}\ell}2}^2} \\
		& \quad - \frac{c}2\ln\ab{\rb{1+\frac{\ell\rb{\sqrt{c}-1}}{2\sqrt{c}}}^2+\frac{\rb{\sqrt{c}-1}^2\rb{y+\frac{\sqrt{3}\ell}{2}}^2}{c}}\\
		& \quad - \frac{\rb{\sqrt{c}-1}^2\ell}2 + \ep
	\end{align*}
	Let us denote by $F\rb{c,\ell,y,\ep}$ the last expression above. It is straightforward to check that the map $y\longmapsto F\rb{c,\ell,y,0}$ is strictly decreasing on $[0,\infty)$. Furthermore the map $\ell\longmapsto F\rb{c,\ell,0,0}$ is strictly decreasing on $[0,\infty)$ and moreover $F(c,0,0,0)=0$. Since $\ell>0$ it follows that there exists $\wt{C}_5>0$ such that $F\rb{c,\ell,y,0}\leq -\wt{C}_5$ for all $c>1$ and $y\geq 0$. The first assertion now follows by choosing $\epsilon=C_5=\frac12\wt{C}_5$.
		
	For the second assertion we will apply the bound \eqref{eq:GammaAS}: for any $\eta>0$,
	\begin{align*}
		\Re\ab{\hn\rb{\zcrit}-\hn(w)}&\leq C- \ln\abs{\Gamma(x+iy)}+\cadj\ln\abs{\Gamma(\gamma+x+iy)}\\
		& \leq C - \ln\rb{\sqrt{2\pi}(1+\eta)e^{-\pi\abs{y}/2} \abs{y}^{x-\tfrac12}} \\
		& \quad+\cadj\ln\rb{\sqrt{2\pi}(1-\eta)e^{-\pi\abs{y}/2}\abs{y}^{x+\gamma-\tfrac12}}\\
		& \leq C+\rb{1-\cadj}\, \frac\pi2 \abs{y}
	\end{align*}
	from which the estimate follows by observing that $\cadj\in\ab{c, c+\frac1n}$.
\end{proof}

\bibliography{biblio}

\end{document}